\newtheorem{Thm}{Theorem}[section]
\newtheorem{Cor}[Thm]{Corollary}
\newtheorem{Lem}[Thm]{Lemma}
\newtheorem{Pro}[Thm]{Proposition}
\theoremstyle{definition}
\newtheorem*{Def}{Definition}
\newtheorem{Exa}[Thm]{Example}
\newcommand{\RN}[1]{%
	\textup{\uppercase\expandafter{\romannumeral#1}}%
}
\date{}
\begin{document}
	
	\centerline {\Large{\bf {Ascent and Descent of Weighted Composition Operators}}} 
	\centerline{\Large{\bf {on Lorentz spaces}}}
	
	\vspace*{2mm}
	\centerline{\bf {Gopal Datt\textsuperscript{1} and  Daljeet Singh Bajaj\textsuperscript{2}}}
	
	\centerline{\textsuperscript{1}Department of Mathematics, PGDAV College, University of Delhi, Delhi-110065, India.}
	
	\centerline{\textsuperscript{2}Department of Mathematics, University of Delhi, Delhi-110007, India.}
	\vspace*{1.2mm}
	\centerline{E-Mail: \textsuperscript{1}gopal.d.sati@gmail.com; \textsuperscript{2} daljeet.math.009@gmail.com}
	
	\centerline{}

\begin{abstract}
	The aim of this article is to detect the ascent and descent of weighted composition operators on Lorentz spaces. We investigate the conditions on the measurable transformation $T$ and the complex-valued measurable function $u$ defined on measure space $(X,\mathcal{A},\mu)$ that cause the weighted composition operators on Lorentz space $L(p,q)$, $1<p\leq \infty$, $1\leq q\leq \infty$ to have finite or infinite ascent (descent). We also give a number of examples in support of our findings. 
	
\end{abstract}

\noindent
{\bf 2020 Mathematics Subject Classification:} { Primary: 47B33, 47B37, 47B38; Secondary: 46E30}.\\
{\bf Keywords:} Ascent, Descent, Weighted composition operator, Lorentz spaces.

\section{Introduction}
	The theory of ``Lorentz spaces" is presented in the work of G. G. Lorentz in \cite{lor1, lor2}. For a pair of numbers $p$ and $q$, ($1< p< \infty , \ 1\le q < \infty $ or $1< p \le \infty ,  \ q = \infty$), the Lorentz  space is defined as the collection of all complex-valued measurable functions $h$ defined on the $\sigma-$finite complete measure space $(X,\mathcal{A},\mu)$ whose norm $\|h\|_{pq} < \infty$, where the norm is defined as follows
	\begin{align*}
		\|h\|_{pq}~ = ~ \begin{cases}\left(\frac{q}{p}\bigintsss_0^\infty \big(
			t^{1/p} h^{**}(t)\big)^q ~\frac{dt}{t}\right)^{1/q}, &
			\  1< p< \infty , \ 1\le q < \infty  \\
			\underset{t>0}{\sup}\,\,\, t^{1/p} h^{**}(t),& \ 1< p \le \infty ,  \ q = \infty
		\end{cases},
	\end{align*} 
	where $h^{**}$ is the average function of non-increasing rearrangement function $h^*$ given by $h^*(t)=\inf \{ s > 0 : \mu\{x\in X : |h(x)|> s \} \le t\}$, $s,~t\geq 0$. It is represented mathematically as $L(p,q)$. These spaces are generalization of classical Lebesgue spaces. In $1966$, R. Hunt \cite{hunt} discussed various properties and useful tools in support to describe the dual of Lorentz spaces. The duality problem regarding Lorentz spaces was examined by Cwikel \cite{cwikel} and Cwikel and Fefferman \cite{cf1,cf2}. To know more about the Lorentz spaces, we suggest the reader to read \cite{cc1,pkjf} and the references therein.

The study of multiplication and composition operators on different function spaces, like $L^p-$spaces, Lorentz spaces, Orlicz spaces, and many more, is the centre of attraction for several mathematicians. Many interesting mathematical results have been developed and continue to develop in the current period as well. A lot of literature is available about the operators, which are formed by the product of composition and multiplication operators, and these operators are known as weighted composition operators. Such operators are  symbolically represented as $W_{u,T}$, where $u$ is a complex-valued measurable function and $T$ is a measurble transformation defined on $(X,\mathcal{A},\mu)$ respectively. The available theory on weighted composition operators involves their boundedness, compactness, closedness of range and other mathematical results. We refer the reader to read the following references \cite{adv,cc2,gks,kumar,tk} with reference therein.

The terms ascent and descent of linear operators were studied by A.E. Taylor in his book \cite{tay1} and applications of these notions in the development of spectral study of compact operators are described in details. One of the important theorem in his book states that ``If the ascent and descent of a linear operator $L$ defined on a normed space $(Y,\|\cdot\|_Y)$ are finite and equal to $k_1$ and $k_2$ respectively, then $k_1=k_2$ and $X$ can be written as direct sum of null space of $L^{k_1}$ and range space of $L^{k_1}$". To look into the properties, examples and other applications, one can refer to \cite{bg, db, emr}.

The notions of ascent and descent have been examined for composition and weighted composition operators on $l_p$ spaces, $L^p$-spaces, Lorentz spaces, Orlicz spaces, and several more. Motivated by the direction of study, we discussed the properties of ascent and descent of weighted composition operators on Lorentz sequence spaces of functions defined on the measure space $(\mathbb{N},2^{\mathbb{N}}, \mu)$, where $\mu$ denotes the counting measure  and $2^{\mathbb{N}}$ denotes the power set of natural numbers $\mathbb{N}$, in \cite{db}. This paper extends the study and explores it on Lorentz spaces of complex valued functions defined over general ($\sigma$-finite) measure spaces. In section 2, we set up the notations used in the paper and recall the definition and important results that help us in our pursuit. In sections 3 and 4, we discuss main results of the paper which provide classes of functions $u$ and $T$  and compute ascent and descent of the induced weighted composition operators $W_{u,T}$. At the end, we discuss some applications of our findings in order to calculate the ascent and descent of weighted composition operators.

\section{Preliminaries}

In this section, we give necessary definitions, theorems and other related informations required to form the base for this paper. We begin with the definition of distribution function, non-increasing rearrangement and average function which lead to define norm on Lorentz space. 

\noindent Let $(X,\mathcal{A},\mu)$ be a $\sigma$-finite complete measure space and $h:X\mapsto \mathbb{C}$ be a measurable function. By $\mu_h$, we mean the distribution function of $h$ and is given by 
$$\mu_h(s) = \mu\big(\{x\in X:|h(x)|> s\}\big) ~~\text{ where } s\geq 0.$$
With the help of $\mu_h$, we define the non-increasing rearrangement $h^*$ as 
$$h^*(t) ~ =~ \inf\, \{ s > 0 :  \mu_h (s) \le t\},  ~ t > 0.$$
For $t>0$,
$$h^{**}(t) ~= ~ \frac{1}{t}\int_0^t h^*(s)ds.$$

\noindent The set of all (equivalence classes of) measurable functions $h$ having finite norm is known as Lorentz space. It is denoted by $L(p, q)(X,\mathcal{A}, \mu)$ (or in the short form we write $L(p, q)$) where $1<p \le \infty, ~1 \le q \le \infty $. For $h\in L(p,q)$, the norm $\|h\|_{pq}$ is given by 
\begin{align*}
	\|h\|_{pq}~ = ~ \begin{cases}\left(\frac{q}{p}\bigintsss_0^\infty \big(
		t^{1/p} h^{**}(t)\big)^q ~\frac{dt}{t}\right)^{1/q}, &
		\  1< p< \infty , \ 1\le q < \infty  \\
		\underset{t>0}{\sup}\,\,\, t^{1/p} h^{**}(t),& \ 1< p \le \infty ,  \ q = \infty
	\end{cases}
\end{align*}
 The Lorentz space $L(p,q)$ is a Banach space with respect to the norm $\| \cdot\|_{pq}$. This norm is equivalent to quasi-norm given by 
\begin{align*}
	\|h\|_{(pq)}~ = ~ \begin{cases}\left(\frac{q}{p}\bigintsss_0^\infty \big(
		t^{1/p} h^{*}(t)\big)^q ~\frac{dt}{t}\right)^{1/q}, &
		\  1< p< \infty , \ 1\le q < \infty  \\
		\underset{t>0}{\sup}\,\,\, t^{1/p} h^{*}(t),& \ 1< p \le \infty ,  \ q = \infty
	\end{cases}.
\end{align*}
It is easy to note that the followings are equivalent for $h\in L(p,q)$. 
\begin{enumerate}[(i)]
	\item $\mu_h(0)=0$,  
	\item $h=0$ almost everywhere on $X$,
	\item $\|h\|_{pq}=0$,
	\item $\|h\|_{(pq)}=0$. 
\end{enumerate} Further, for $p=q$, Lorentz spaces are equivalent to classical Lebesgue spaces, that is, $L^p$ and $L(p, p)$ are equivalent where $1<p \le \infty $. For the details of Lorentz spaces, the readers are referred to \cite{hunt, lor1, lor2}.

Let $T:X\mapsto X$ be a mapping. For each $k\in\mathbb{N}\cup \{0\}$, we define $T^0(A)=I(A)=A$, $\mu_0(A)=\mu(A)$ and  
$$\mu_k(A):=\mu\circ T^{-k}(A)~=~\mu\circ T^{-(k-1)}(T^{-1}(A))~=~\mu\circ T^{-1}(T^{-(k-1)}(A)) ~~\text{for } A\in\mathcal{A}.$$
A mapping $T: X \mapsto X$ is said to be measurable transformation if $T^{-1}(A)$ is measurable set whenever $A$ is measurable set. In addition, if the condition $\mu(A)=0$ implies $\mu\circ T^{-1}(A)=0$ is satisfied then $T$ is called a non-singular measurable transformation. In this case, the measure  $\mu\circ T^{-1}$ is absolutely continuous with respect to the measure $\mu$ and we denote it as $\mu\circ T^{-1}\ll \mu$. Moreover, for every $k\in\mathbb{N}\cup\{0\}$, $T^k : X\mapsto X$ is a non-singular measurable tansformation. Thus, we have
\begin{align}\label{11}
	\cdots\ll \mu_{k+1}\ll \mu_{k}\ll~\mu_{k-1}\ll \cdots\ll \mu_1\ll\mu_0.
\end{align}
Now, by Radon-Nikodym theorem, we get a locally integrable function $h_{T^k} (\geq 0)$ on $X$ such that
\begin{align}\label{12}
	\mu_k(A) = \int_A h_{T^k}(x)\, d\mu(x), \mbox{ for all } A\in \mathcal{A}. 
\end{align}
Let $T$ be a non-singular measurable transformation and $u$ be a complex-valued measurable function on $X$. We define a mapping $W_{u,T}$ from a Lorentz space $L(p,q)$ into the vector space of all complex-valued measurable functions by 
$$W_{u,T}(f)= u\circ T \cdot f\circ T, ~~f\in L(p,q).$$
It can be easily verified that this mapping is a linear tansformation. If $W_{u,T}$ is bounded with range in $L(p,q)$ then $W_{u,T}$ is known as weighted composition operator on $L(p,q)$. On taking $T=I$ and $u=1$, we get $W_{u,T}=M_u$ and $W_{u,T}=C_T$ which are, respectively, said to be a multiplication and composition operator. In \cite{adv}, the followings are proved:
\begin{enumerate}[(i)]
	\item if $u$ and $h_T$ are essentially bounded (i.e. $u, h_T\in L_\infty(\mu)$) then $W_{u,T}$ is bounded on $L(p,q)$, 
	
	\item if $W_{u,T}$ is bounded on $L(p,q)$ and $T(A_\epsilon)\subseteq A_\epsilon$ for each $\epsilon>0$, where $A_\epsilon=\{x:|u(x)|>\epsilon\}$, then $u\in L_\infty(\mu)$.
\end{enumerate}
 Together $(i)$ and $(ii)$ imply the following.

\begin{Thm}
	Let $u$ be the complex valued measurable function and $T$ be the non-singular and measurable transformation such that $h_T$ is essentially bounded $(h_T\in L_{\infty}(\mu))$ and $T(A_\epsilon)\subseteq A_\epsilon$ for every $\epsilon>0$, where $A_\epsilon=\{x:|u(x)|>\epsilon\}$. Then, a necessary and sufficient condition for $W_{u,T}$ to be bounded on $L(p,q)$ $1<p \leq \infty$, $1\leq q\leq \infty$ is that $u\in L_\infty(\mu)$.
\end{Thm}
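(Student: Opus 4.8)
The plan is to read the statement as the conjunction of the two facts (i) and (ii) recalled above from \cite{adv}, and to verify that the standing hypotheses are arranged precisely so that each direction of the equivalence follows from one of them. Since $W_{u,T}f = (uf)\circ T$, the whole argument hinges on how composition with $T$ transforms distribution functions, and this transformation is governed by the Radon--Nikodym density $h_T$ from \eqref{12}: for $k=1$ one has $\mu_{(uf)\circ T}(s) = \mu_1(\{|uf|>s\}) = \int_{\{|uf|>s\}} h_T\,d\mu$.

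For sufficiency, I would assume $u \in L_\infty(\mu)$. Together with the standing hypothesis $h_T \in L_\infty(\mu)$, both weights appearing in (i) are essentially bounded, so (i) immediately yields that $W_{u,T}$ is bounded on $L(p,q)$. Concretely, the mechanism behind (i) is the chain $\mu_{(uf)\circ T}(s) \le \|h_T\|_\infty\,\mu_{uf}(s) \le \|h_T\|_\infty\,\mu_f(s/\|u\|_\infty)$, which transfers to a pointwise comparison of the non-increasing rearrangements $((uf)\circ T)^*$ and $f^*$, and hence to a norm bound $\|W_{u,T}f\|_{pq} \le C\,\|f\|_{pq}$ with $C$ depending only on $\|u\|_\infty$ and $\|h_T\|_\infty$.

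For necessity, I would assume $W_{u,T}$ is bounded on $L(p,q)$. The standing hypothesis $T(A_\epsilon)\subseteq A_\epsilon$ for every $\epsilon>0$ is exactly the condition required by (ii), which then forces $u \in L_\infty(\mu)$. This is the substantive half: the invariance of the super-level sets $A_\epsilon = \{|u|>\epsilon\}$ under $T$ prevents composition from diluting the large values of $u$, so that an unbounded $u$ would produce, via characteristic functions of finite-measure subsets of $A_n = \{|u|>n\}$, a sequence of test functions on which $W_{u,T}$ has unbounded norm, contradicting boundedness.

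The main obstacle is not in assembling the equivalence --- that is formal once (i) and (ii) are in hand --- but in recognizing that the hypotheses $h_T\in L_\infty(\mu)$ and $T(A_\epsilon)\subseteq A_\epsilon$ are exactly the data needed to close each implication: the first supplies the second essentially bounded weight demanded by the sufficiency in (i), while the second is the non-trivial structural assumption that makes the necessity in (ii) valid. I would therefore present the proof as two short invocations, stating explicitly which hypothesis feeds which cited result.
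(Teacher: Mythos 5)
Your proposal is correct and matches the paper exactly: the paper's entire proof of this theorem is the single remark that facts (i) and (ii) cited from \cite{adv} ``together imply'' the statement, with (i) plus the standing hypothesis $h_T\in L_\infty(\mu)$ giving sufficiency and (ii) plus the standing hypothesis $T(A_\epsilon)\subseteq A_\epsilon$ giving necessity, which is precisely your two-invocation structure. Your extra sketches of the distribution-function mechanism behind (i) and the test-function mechanism behind (ii) go beyond what the paper records, but they are consistent with the cited results and do not change the approach.
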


Let $Y$ be a normed linear space and $L:Y\mapsto Y$ be a bounded operator. We denote the null space and range space of $L$ by $\mathcal{N}(L)$ and $\mathcal{R}(L)$ respectively. Also, note that, the symbol $\mathfrak{B}(Y)$ denotes the collection of all bounded operators on $Y$. For $L\in \mathfrak{B}(Y)$, we define
\begin{itemize}
	\item the ascent of $L$ as 
			$$\alpha(L):=\inf\{k\in\mathbb{N}\cup\{0\}~|~\mathcal{N}(L^{k+1})=\mathcal{N}(L^{k})\}.$$
	\item the descent of $L$ as 
			$$\beta(L):=\inf\{k\in\mathbb{N}\cup\{0\}~|~\mathcal{R}(L^{k+1})=\mathcal{R}(L^{k})\}.$$
\end{itemize}  
If, in any case, the set is empty then that particular term (ascent or descent) is equal to infinite. Several examples are given by A.E. Taylor in his article \cite{tay2}. 
	
	\section{Multiplication Operators}
	 In this section, we discuss the ascent and descent of multiplication operator $M_u$, induced by essentially bounded complex-valued function $u:X\rightarrow \mathbb{C}$, defined on Lorentz space $L(p,q)$, $1< p\leq \infty$ and $1\leq q\leq \infty$. It is readily verified that the ascent and descent of  $M_u$ is equal to one, if $u=0$ almost everywhere (a.e.). We only left with the case where $u\neq 0$ and the discussion results that if the ascent and descent of $M_u$ are finite then these are either equal to zero or one.
	 
	 \begin{Thm} \label{31} 
	 	Let $u\in L_\infty(\mu)$ be a function in such a way that the induce multiplication operator $M_u$ on $L(p,q)$, $1<p\leq \infty$, $1\leq q \leq \infty$ is well defined. If we assume that $u\neq 0$ on $X$, then ascent of $M_u$ is either equal to zero or one.
	 \end{Thm}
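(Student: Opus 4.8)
The plan is to analyze the multiplication operator $M_u$ by understanding its iterates. Since $M_u(f) = u \cdot f$ for $f \in L(p,q)$, the $k$-th power is simply $M_u^k(f) = u^k \cdot f$, so $M_u^k = M_{u^k}$. The key to computing the ascent is therefore to describe the null space $\mathcal{N}(M_{u^k}) = \{f \in L(p,q) : u^k f = 0 \text{ a.e.}\}$ and to compare $\mathcal{N}(M_u^{k+1})$ with $\mathcal{N}(M_u^k)$. First I would observe that for any $k \in \mathbb{N}$, the condition $u^k f = 0$ almost everywhere is equivalent to $f = 0$ almost everywhere on the set $\{x : u(x) \neq 0\}$, simply because $u^k(x) = 0$ if and only if $u(x) = 0$. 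This means that the null space does not actually depend on $k$: we have $\mathcal{N}(M_u^k) = \{f \in L(p,q) : f = 0 \text{ a.e. on } \{u \neq 0\}\}$ for every $k \geq 1$.

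Having established that $\mathcal{N}(M_u^k)$ is the same set for all $k \geq 1$, the computation of the ascent follows directly from the definition $\alpha(M_u) = \inf\{k : \mathcal{N}(M_u^{k+1}) = \mathcal{N}(M_u^k)\}$. Since the null spaces stabilize already at $k=1$, we have $\mathcal{N}(M_u^2) = \mathcal{N}(M_u^1)$, so the ascent is at most one. To settle whether it is exactly zero or exactly one, I would compare $\mathcal{N}(M_u^0) = \mathcal{N}(I) = \{0\}$ with $\mathcal{N}(M_u) = \{f : uf = 0 \text{ a.e.}\}$. The ascent is zero precisely when $\mathcal{N}(M_u) = \{0\}$, which happens exactly when $u \neq 0$ almost everywhere (equivalently, when the set $\{u = 0\}$ is $\mu$-null so that $f = 0$ a.e. on $\{u \neq 0\}$ forces $f = 0$ a.e.); otherwise the ascent equals one.

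The routine verification that remains is to confirm that the set $\{f \in L(p,q) : f = 0 \text{ a.e. on } \{u \neq 0\}\}$ is genuinely nontrivial when $\mu(\{u=0\}) > 0$; this uses $\sigma$-finiteness of the measure to produce a nonzero function supported on a subset of $\{u = 0\}$ of finite positive measure, whose characteristic function lies in $L(p,q)$ with finite norm. I would invoke the equivalence of conditions (i)--(iv) stated earlier in the excerpt to identify membership in $L(p,q)$ and nontriviality via the norm. The main conceptual obstacle, though minor here, is simply the correct bookkeeping of the case distinction and the observation that $\{u^k \neq 0\} = \{u \neq 0\}$; once that is in hand, the dichotomy between ascent zero and ascent one is immediate, and no estimate on the Lorentz norm itself is actually needed beyond recognizing when the null space reduces to $\{0\}$.
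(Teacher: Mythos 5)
Your proposal is correct and takes essentially the same route as the paper's own proof: both hinge on the observation that $\{x : u^k(x)\neq 0\}=\{x : u(x)\neq 0\}$, so the null spaces $\mathcal{N}(M_u^k)$ stabilize at $k=1$, and both use $\sigma$-finiteness to exhibit a nonzero characteristic function in $\mathcal{N}(M_u)$ when $\mu(\{x : u(x)=0\})>0$. The only difference is presentational: you describe all the null spaces for $k\geq 1$ at once, while the paper runs the same dichotomy in two cases using distribution functions.
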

	
	\begin{proof}
		The condition $u\neq 0$ gives rise to two cases. In the first case, we assume that $\mu(\{x\in X~:~u(x)=0\}) = 0$,
		 that is, $u\neq 0$ a.e. In this case, we find that $\mathcal{N}(M_u)$, the null space of $M_u$, is $\{0\}$. Let $f\in L(p,q)$ satisfying $M_u f=0$, which means that $\mu_{M_u f}(0)=0$, hence we have 
	\begin{align*}
			\mu_f(0)&=\mu(\{x\in X~|~|f(x)| >0\})\\
			&=\mu(\{x\in X~|~|u(x)|=0~\&~ |f(x)|> 0\})+\mu(\{x\in X~|~|(M_u f)(x)|> 0\})\\
			&\leq \mu(\{x\in X~|~|u(x)|=0\})+\mu_{M_u f}(0)=0,
	\end{align*}
		which implies $f=0$ almost everywhere on $X$. Thus we have $\mathcal{N}(M_u)=\{0\}$ and hence our claim $\alpha(M_u)=0$ is achieved.\\
		In the second case, we assume that $\mu(\{x\in X~:~u(x)=0\})>0$. As the measure $\mu$ is $\sigma-$finite, we can pick a non-zero finite measurable subset $A$ of $\{x\in X~|~u(x)=0\}$. This provide a non-zero characteristic function $\chi_A$ in $\mathcal{N}(M_u)$. This results that $\alpha(M_u)>0$. Now, let $f\in \mathcal{N}(M_u^2)$. Then $u^2(x) f(x)=0$ a.e. on $X$ which implies that $\mu_{M_u^2 f}(0)=0$. It follows that 
		$$\mu_{M_u f}(0)=\mu(\{x\in X~|~|u(x) f(x)| > 0\})=\mu(\{x\in X~|~|u^2(x) f(x)| > 0\})=\mu_{M_u^2 f}(0)=0.$$ 
		This shows that $f\in \mathcal{N}(M_u)$. Hence, $\mathcal{N}(M_u)=\mathcal{N}(M_u^2)$. This means, $\alpha(M_u)=1$. As a conclusion, we obtain that $\alpha(M_u)$ is either $0$ or $1$.
	\end{proof}

Now, we provide examples which illustrate the use of above theorem.

\begin{Exa}\label{32a} 
	Consider the measure space $\left([0,1],\mathcal{A},\mu\right)$ and define $u:[0,1]\rightarrow \mathbb{C}$ as $u(x)=x$.  Then $M_u$ is a bounded operator on Lorentz space $L(p,q)$, $1<p\leq \infty$, $1\leq q \leq \infty$ and $u\neq 0$ a.e.  Hence by Theorem \ref{31}, $\alpha(M_u)=0$. 
\end{Exa}
\begin{Exa}\label{32b} 
	Consider the measure space $\left([0,1],\mathcal{A},\mu\right)$ and define $u(x)$ as $x$, when $\frac{1}{2}\le x\le 1$ and zero otherwise. Again, $M_u$ is a bounded operator on Lorentz space $L(p,q)$, $1<p\leq \infty$, $1\leq q \leq \infty$ and in this case we can say that $u\neq 0$ (but this time it is not the case $u\neq 0$ a.e.).  Hence by Theorem \ref{31}, $\alpha(M_u)=1$. 
\end{Exa}

In Example \ref{32a}, we see that the ascent of multiplication operator $M_u$ is equal to zero. Now, let us find its descent. Take $f\equiv 1$. Since $f$ lies in $L(p,q)$ and $u\in L_\infty(\mu)$, $g_k:=M_u^k f\in \mathcal{R}(M_u^{k})\subseteq L(p,q)$ for every $k\geq 0$. But, we can note that $g_k\notin \mathcal{R}(M_u^{k+1})$. If it is, then we can find $h_k\in L(p,q)$ in such a way that $g_k=M_u^{k+1}h_k$. This implies that $h_k(x)=1/x$ for every $k\in\mathbb{N}\cup\{0\}$, where $x\in (0,1]$.  This leads to a contradiction as this $h_k$ does not lie in $L(p,q)$, indeed, we have $\|h_k\|_{pq}= \infty$. Hence, we obtain that $g_k\in \mathcal{R}(M_u^k)\setminus \mathcal{R}(M_u^{k+1})$ for every non-negative integer $k$. It follows that $\beta(M_u)=\infty$. 

The conclusion, that is $\beta(M_u)=\infty$, remains the same on replacing the measure space $\left([0,1],\mathcal{A},\mu\right)$ by $\left((0,1),\mathcal{A},\mu\right)$ in Example \ref{32a}. In Example \ref{32a}, we notice that the function $u$ is not bounded away from zero. This gives an idea to think for the function $u$ which is bounded away from zero, and our next result detects the descent of multiplication operator on Lorentz space induced by such functions. Recall that, a complex valued function $u$ defined on $X$ is said to be bounded away from zero if $|u(x)|>\epsilon$ a.e. on $X$ for some $\epsilon>0$.

	\begin{Thm}\label{45}
		If $u$ is such that it is bounded away from zero and inducing $M_u$ on $L(p,q)$, $1<p\leq \infty$, $1\leq q \leq \infty$, then the descent of $M_u$ is equal to zero.
	\end{Thm}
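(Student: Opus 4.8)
The plan is to reduce the claim to the surjectivity of $M_u$. By the definition of descent, $\beta(M_u)=0$ holds precisely when $\mathcal{R}(M_u^{1})=\mathcal{R}(M_u^{0})$; since $M_u^{0}=I$ we have $\mathcal{R}(M_u^{0})=L(p,q)$, so it suffices to prove $\mathcal{R}(M_u)=L(p,q)$, that is, that every $g\in L(p,q)$ lies in the range of $M_u$.

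The key observation is that the hypothesis ``$u$ is bounded away from zero'' furnishes an inverse weight. Since $|u(x)|>\epsilon$ a.e. for some $\epsilon>0$, the function $u$ is nonzero a.e., so $1/u$ is defined a.e. and measurable, and moreover $|1/u(x)|<1/\epsilon$ a.e., whence $1/u\in L_\infty(\mu)$. First I would record this, and then invoke the sufficient condition $(i)$ recalled in Section 2 (applied with $T=I$, for which $h_T\equiv 1$ is essentially bounded) to conclude that the multiplication operator $M_{1/u}$ is bounded on $L(p,q)$; in particular $M_{1/u}$ maps $L(p,q)$ into itself.

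Next, for an arbitrary $g\in L(p,q)$ I would set $f:=M_{1/u}g=g/u\in L(p,q)$ and compute $M_u f=u\cdot(g/u)=g$ a.e., using that $u\cdot(1/u)=1$ a.e. (valid because $u\neq 0$ a.e.). This exhibits $g$ as an element of $\mathcal{R}(M_u)$, and since $g$ was arbitrary it yields $\mathcal{R}(M_u)=L(p,q)$, which by the first paragraph gives $\beta(M_u)=0$.

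I do not expect a genuine obstacle here; the only point that needs care is the verification that $f=g/u$ actually belongs to $L(p,q)$, rather than being merely measurable. If one prefers to avoid citing the boundedness result, this membership can be checked directly from the pointwise bound $|f|\le |g|/\epsilon$ a.e.: the monotonicity and homogeneity of the non-increasing rearrangement then give $f^{*}\le \epsilon^{-1}g^{*}$ and $f^{**}\le \epsilon^{-1}g^{**}$, so that $\|f\|_{pq}\le \epsilon^{-1}\|g\|_{pq}<\infty$. Either route closes the proof.
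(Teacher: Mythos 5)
Your proposal is correct and follows essentially the same route as the paper: both exhibit $g/u$ (defined a.e., or set to zero on the null set where $|u|\le\epsilon$) as a preimage of an arbitrary $g\in L(p,q)$, and both justify its membership in $L(p,q)$ via the estimate $\|g/u\|_{pq}\le \epsilon^{-1}\|g\|_{pq}$ coming from monotonicity of the rearrangement. Your fallback argument in the last paragraph is precisely the paper's computation, so no further comment is needed.
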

	
	\begin{proof}
		Let $h\in L(p,q)$. Define 
		\begin{align*}
			g'(x)=\left\{\begin{array}{cl}
				\frac{h(x)}{u(x)} & \mbox{ if } |u(x)|>\epsilon\\
				0 & \mbox{ if } |u(x)|\leq\epsilon
			\end{array}\right. 
		\end{align*}
		 Then $g'\in L(p,q)$. Indeed,  $h$ and $u$ are measurable and $|u(x)|>\epsilon$ a.e. on $X$, for some $\epsilon>0$. Thus $g'$ is measurable function and, for every $s\geq 0$, we have 
		\begin{align*}
			\mu_{g'}(s)= \mu(\{x\in X~|~|g'(x)|>s\}) &\leq \mu(\{x\in X~|~|h(x)|>s\epsilon\})=\mu_{h}(s\epsilon).
		\end{align*} 
		Thus, the non-increasing rearrangement and average function of $g'$ satisfy that for $t>0$, 
		$$(g')^*(t)\leq \frac{1}{\epsilon} ~h^*(t) \mbox{~~~~ and ~~~~} (g')^{**}(t)\leq \frac{1}{\epsilon}~ h^{**}(t)$$
		respectively. As a consequence, $\|g'\|_{pq}\leq \frac{1}{\epsilon} \|h\|_{pq}$, $1<p\leq \infty$, $1\leq q\leq \infty$. Since $\mu(\{x\in X~|~M_ug'(x)\neq h(x)\})\leq \mu(\{x\in X~|~|u(x)|\leq\epsilon\})=0$, so $M_ug'=h$ almost everywhere on $X$. As a result, $h\in \mathcal{R}(M_u)$. This shows that $\mathcal{R}(M_u)= L(p,q)= \mathcal{R}(M_u^0)$. Thus, $\beta(M_u)=0$.
	\end{proof}

	\section{Weighted Composition Operators}
	
	Now we attempt to discuss the ascent and descent of weighted composition operators $W_{u,T}$, which in particular, for $T \equiv I $, the identity mapping, become multiplication operators and for $u \equiv 1$ become composition operators. The section is divided into two subsections that examine the ascent and descent of weighted composition operators on Lorentz spaces, respectively. Let us define some useful notations which we use throughout this section. For non-negative integer $k$, define $N_0=X$,~  $N_k=\{x\in X~|~u(T^i(x))\neq 0 \text{ for all }1\leq i\leq k\}$, $X_k=\{x\in X~|~h_{T^k}(x)=0\}$ and $L(p,q)(X_k)=\{h\in L(p,q): h=0 \mbox{ a.e. on } X_k^c\}$, where $X_k^c$ indicates the complement of $X_k$.  
	
	\subsection{Ascent}
	We devote this subsection to trace the ascent of weighted composition operators $W_{u,T}:L(p,q)\mapsto L(p,q)$. We first recall that if  mappings $u$ and $T$ are such that ($u\in L_\infty(\mu)$ and $T$ is non-singular with $h_T\in L_\infty(\mu)$) they induce multiplication operator $M_u$ and composition operator $C_T$ on $L(p,q)$ then they induce the weighted composition operator $W_{u,T}$ on $L(p,q)$ as  $W_{u,T}=C_T M_u$. But,  mappings $u$ and $T$ can induce a weighted composition operator $W_{u,T}$  on $L(p,q)$ without being $M_u$ and $C_T$ on $L(p,q)$ are well defined as one can verify that for $u=0$ almost everywhere (a.e.),  $W_{u,T} (=0)$ is a well defined operator on $L(p,q)$ for each choice of  non-singular measurable transformation $T$, and we can deduce that its ascent is equal to one.	Now onwards, our $u$ is always assumed to be a non-zero function, which we will not express explicitly. In this sequel, we first observe the following.

	\begin{Pro}
		 If $u\in L_\infty(\mu)$ and $W_{u,T}\in\mathfrak{B}(L(p,q))$, $1<p\leq \infty$, $1\leq q\leq \infty$, then $L(p,q)(X_k)\subseteq \mathcal{N}(W_{u,T}^k)$ for each integer $k\geq 0$.
	\end{Pro}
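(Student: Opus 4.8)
The plan is to reduce the claim to a single almost-everywhere vanishing statement about $f\circ T^k$, and then to extract that statement from the Radon--Nikodym description \eqref{12} of $\mu_k$. First I would record the explicit form of the iterates: a routine induction on $k$, using that each application of $W_{u,T}$ precomposes with $T$ and inserts one more weight factor, gives
\begin{align*}
	W_{u,T}^k(f)(x) = \left(\prod_{i=1}^{k} u(T^i(x))\right) f(T^k(x)), \qquad f \in L(p,q).
\end{align*}
Since $u \in L_\infty(\mu)$ the product factor is bounded, so $W_{u,T}^k(f) = 0$ a.e.\ as soon as $f \circ T^k = 0$ a.e.; thus it suffices to show $f \circ T^k = 0$ a.e.\ whenever $f \in L(p,q)(X_k)$.

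Next I would pass to the support set. For $f \in L(p,q)(X_k)$ put $E = \{x \in X : f(x) \neq 0\}$; the defining condition ``$f = 0$ a.e.\ on $X_k^c$'' says precisely that $\mu(E \cap X_k^c) = 0$. Because $f(T^k(x)) \neq 0$ exactly when $T^k(x) \in E$, the set on which $f \circ T^k$ fails to vanish is $T^{-k}(E)$, so I must check that this set is $\mu$-null. Using \eqref{12} and splitting over $X_k$ and $X_k^c$,
\begin{align*}
	\mu\big(T^{-k}(E)\big) = \mu_k(E) = \int_E h_{T^k}\, d\mu = \int_{E \cap X_k} h_{T^k}\, d\mu + \int_{E \cap X_k^c} h_{T^k}\, d\mu = 0,
\end{align*}
the first integral vanishing since $h_{T^k} = 0$ on $X_k$ and the second since $\mu(E \cap X_k^c) = 0$. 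Hence $f \circ T^k = 0$ a.e., giving $f \in \mathcal{N}(W_{u,T}^k)$ and the desired inclusion.

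The degenerate case $k = 0$ is consistent: $\mu_0 = \mu$ forces $h_{T^0} = 1$ a.e., so $X_0$ is null, $L(p,q)(X_0) = \{0\}$, and $\mathcal{N}(W_{u,T}^0) = \{0\}$. The one point that needs care is the second integral above: being ``supported on $X_k$'' is only an almost-everywhere statement, so the contribution of $E \cap X_k^c$ must be discarded because that set is $\mu$-null, not because $h_{T^k}$ vanishes there. Everything else is bookkeeping, and the hypotheses $u \in L_\infty(\mu)$ and $W_{u,T} \in \mathfrak{B}(L(p,q))$ enter only to guarantee that the iterates $W_{u,T}^k$ are genuine bounded operators on $L(p,q)$.
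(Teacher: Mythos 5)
Your proof is correct and takes essentially the same route as the paper's: both arguments come down to showing that the relevant preimage set is $\mu$-null via the Radon--Nikodym identity $\mu_k(E)=\int_E h_{T^k}\,d\mu$, split over $X_k$ (where $h_{T^k}=0$ by definition) and $X_k^c$ (where the set is null because $f=0$ a.e. there). The only cosmetic difference is that the paper estimates the distribution function $\mu_{W_{u,T}^k h}(s)$ for every $s\ge 0$, discarding the sets $\{x\in X : |u\circ T^i(x)|>\|u\|_\infty\}$ as null by non-singularity, whereas you first reduce to the a.e. vanishing of $f\circ T^k$.
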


	\begin{proof}
	For $k=0$, the result is straightforward, as $L(p,q)(X_0)=\{0\}$. So, we let $k$ be a natural number and $h$ be a member of $L(p,q)(X_k)$. As $h=0$ a.e. on $X_k^c$, $|u(x)|$ is bounded above by $M^* (=\|u\|_{\infty})$ and $T^i$ is non-singular for each $i\in \mathbb{N}$, we have, for $s\ge 0$,
	
	\begin{align*}
		\mu_{W_{u,T}^k h}(s) &= \mu\left(\left\{x\in X~|~\left|\prod\limits_{i=1}^{k} u(T^i(x)) h(T^k(x))\right|>s\right\}\right)\\
		&\leq\mu\left(\left\{x\in X~|~\left|h(T^k(x))\right|>\frac{s}{(M^*)^k}\right\}\right)+\sum_{i=1}^{k}\mu\left(\left\{x\in X~|~\left|u\circ T^i(x)\right|>M^*\right\}\right)\\
		&= \mu_{k}\left(\left\{x\in X~|~|h(x)|>\frac{s}{(M^*)^k}\right\}\right)+\sum_{i=1}^{k}\mu_i\left(\left\{x\in X~|~\left|u(x)\right|>M^*\right\}\right)\\
		&=\int\limits_{\left\{x\in X~|~|h(x)|>\frac{s}{(M^*)^k}\right\}} h_{T^k}(x)\, d\mu(x) = 0.
	\end{align*}
As a result, $\|W_{u,T}^k h\|_{pq} = 0$, $1<p\leq \infty$, $1\leq q\leq \infty$, that is, $h \in \mathcal{N}(W_{u,T}^k)$.
\end{proof}

The  reverse relation in $L(p,q)(X_k)\subseteq \mathcal{N}(W_{u,T}^k)$ can be obtained when $h_T=0$ a.e. on $(Supp(u))^c$, where the symbol $Supp(u)$ indicates the support of $u$ and $(Supp(u))^c=X\setminus Supp(u)$.

	\begin{Pro}\label{32}
		Let $u:X\mapsto \mathbb{C}$ and $T:X\mapsto X$ be the non-singular mappings such that $W_{u,T}\in \mathfrak{B}(L(p,q))$, $1<p\leq \infty$, $1\leq q\leq \infty$. If $h_T=0$ a.e. on $(Supp(u))^c$ then $\mathcal{N}(W_{u,T}^k)\subseteq L(p,q)(X_k)$ for every non-negative integer $k$.  
	\end{Pro}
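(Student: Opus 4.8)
The plan is to translate the membership $h\in L(p,q)(X_k)$ into a statement about the pushforward measure $\mu_k=\mu\circ T^{-k}$ and then feed it the hypothesis through the Radon--Nikodym identity \eqref{12} and the absolute continuity chain \eqref{11}. The case $k=0$ is immediate, since $h_{T^0}=1$ forces $X_0=\emptyset$ and $L(p,q)(X_0)=\{0\}=\mathcal{N}(I)$, so I would fix $k\in\mathbb{N}$ throughout. Writing $E=\{x\in X~|~h(x)\neq 0\}$ for the support of a candidate $h$, I would first observe that, because $h_{T^k}\geq 0$, the requirement ``$h=0$ a.e.\ on $X_k^c$'' (that is, $h\in L(p,q)(X_k)$) is equivalent to $\int_E h_{T^k}\,d\mu=0$, which by \eqref{12} reads $\mu_k(E)=0$, i.e.\ $\mu\big(T^{-k}(E)\big)=\mu\big(\{x\in X~|~h(T^k(x))\neq 0\}\big)=0$. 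Hence the whole proposition reduces to proving that the set $B:=\{x\in X~|~h(T^k(x))\neq 0\}$ is $\mu$-null for every $h\in\mathcal{N}(W_{u,T}^k)$.

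Next I would unpack the null-space condition. If $h\in\mathcal{N}(W_{u,T}^k)$ then $\|W_{u,T}^k h\|_{pq}=0$, so by the equivalences recorded in the Preliminaries $W_{u,T}^k h=0$ a.e., that is $\prod_{i=1}^{k}u(T^i(x))\,h(T^k(x))=0$ for a.e.\ $x$. On $B$ the factor $h(T^k(x))$ is nonzero, so $\prod_{i=1}^{k}u(T^i(x))$ must vanish a.e.\ on $B$; equivalently, up to a $\mu$-null set,
\[
B\subseteq \bigcup_{i=1}^{k}\{x\in X~|~u(T^i(x))=0\}=\bigcup_{i=1}^{k}T^{-i}\big((Supp(u))^c\big).
\]
It therefore suffices to show that each set $T^{-i}\big((Supp(u))^c\big)$, $1\leq i\leq k$, is $\mu$-null.

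Here the hypothesis enters. Put $Z=(Supp(u))^c=\{x\in X~|~u(x)=0\}$. Since $h_T=0$ a.e.\ on $Z$ by assumption, \eqref{12} gives $\mu_1(Z)=\int_Z h_{T}\,d\mu=0$, and as $\mu_1(Z)=\mu\big(T^{-1}(Z)\big)$ the set $T^{-1}(Z)$ is null. To upgrade this from $i=1$ to every $i$, I would invoke the absolute continuity chain \eqref{11}: since $\mu_i\ll\mu_{i-1}\ll\cdots\ll\mu_1$ and absolute continuity is transitive, $\mu_1(Z)=0$ forces $\mu_i(Z)=0$ for all $i\geq 1$. Because $\mu_i(Z)=\mu\big(T^{-i}(Z)\big)$, each $T^{-i}(Z)$ is $\mu$-null, so their finite union is null and thus $\mu(B)=0$, which is exactly what the reduction in the first paragraph demanded.

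The genuinely delicate point is the step in the third paragraph: the hypothesis is a \emph{pointwise} a.e.\ statement about $h_T$ on $Z$, and converting it into the \emph{measure}-zero conclusions $\mu_i(Z)=0$ for all $i$ is precisely where the representation \eqref{12} and the transitivity of the chain \eqref{11} must be combined correctly. Everything else is routine bookkeeping with $T$-preimages together with the elementary fact that a nonnegative integral vanishes if and only if its integrand vanishes a.e.
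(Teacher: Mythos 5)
Your proof is correct and takes essentially the same route as the paper's own argument: both split the set $\{x\in X : h(T^k(x))\neq 0\}$ into the part where every $u\circ T^i\neq 0$ (null because $W_{u,T}^k h=0$ a.e.) and the parts where some $u\circ T^i=0$ (each null by feeding the hypothesis through \eqref{12} and the absolute continuity chain \eqref{11}). The only difference is organizational: you reduce membership in $L(p,q)(X_k)$ to $\mu_k(\{h\neq 0\})=0$ up front, whereas the paper derives $h\circ T^k=0$ a.e. first and performs that conversion at the end via the exhaustion $F_n=\{x\in X\setminus X_k : h_{T^k}(x)>1/n\}$, which is just an inline proof of the vanishing-integral fact you cite.
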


	\begin{proof}		Evidently, the result is proved for $k=0$, as $L(p,q)(X_0)=\{0\}=  \mathcal{N}(I)= \mathcal{N}(W_{u,T}^0)$. Let $k\geq 1$ and $h\in \mathcal{N}(W_{u,T}^k)$. Then
\begin{align*}
\mu_{W_{u,T}^k h}(0) = \mu\big(\{x\in X~|~|(W_{u,T}^k h)(x)|>0\}\big)=0.
\end{align*}
Now using the hypothesis $h_T=0$ a.e. on $(Supp(u))^c$ and equation \eqref{11}, we obtain that
	\begin{align*}
	\mu_{h\circ T^k}(0) &=\mu\big(\{x\in X~:~ |h\circ T^k(x)|> 0\}\big)\\
	&= \mu\bigg(\{x\in X:|h\circ T^k(x)|>0\}\cap (\cap_{i=1}^{k}\{x\in X:|u\circ T^i(x)|> 0\})\bigg)\\ 
		&+ \mu\bigg(\{x\in X:|h\circ T^k(x)|> 0\}\cap (\cup_{i=1}^{k}\{x\in X:|u\circ T^i(x)|= 0\})\bigg)\\
		&\leq \mu_{W_{u,T}^k h}(0)+\sum_{i=1}^{k}\mu_i\big(\{x\in X:|u(x)|=0\}\big)\\
		&=0.
	\end{align*}

Thus $h\circ T^k=0$ almost everywhere on $X$. As a consequence, we have 
		\begin{align*}
			0&= \mu_k\left(\{x\in X~|~h(x)\neq 0\}\right)\\
			&= \int\limits_{E} h_{T^k}(x)\, d\mu(x)+\int\limits_{F} h_{T^k}(x)\, d\mu(x)\\
			&\geq \frac{1}{n}\int_{F_n\cap F}d\mu=\frac{1}{n}\mu(F_n\cap F)
		\end{align*}
		for each $n\in \mathbb{N}$, where $E=\{x\in X_k~|~h(x)\neq 0\}$, $F=\{x\in X\setminus X_k~|~h(x)\neq 0\}$ and $F_n=\{x\in X\setminus X_k~|~h_{T^k}(x)>1/n\}$. This indicates that $\mu(F_n\cap F)=0$ for every $n$. As $F=\cup_{n=1}^{\infty}(F_n\cap F)$, we get $\mu(F)=0$. Therefore, $h=0$ a.e. on $X\setminus X_k$. Thus, $\mathcal{N}(W_{u,T}^k)\subseteq L(p,q)(X_k)$.
	\end{proof}

With the help of next example we show that the condition $h_T=0$ a.e. on $(Supp(u))^c$ cannot be relaxed in the Proposition \ref{32}. For $k=0$, we always have $L(p,q)(X_0)=\mathcal{N}(W_{u,T}^0)=\mathcal{N}(I)$ and hence the example provides the mappings $u$ and $T$ to verify that on relaxing the condition that $h_T=0$ a.e. on $(Supp(u))^c$,  we have $\mathcal{N}(W_{u,T}^k)\not\subseteq L(p,q)(X_k)$ for all $k\in\mathbb{N}$.
\begin{Exa}
	Let $(\mathbb{N},2^{\mathbb{N}},\mu)$ be the measure space. Define $T:\mathbb{N}\rightarrow \mathbb{N}$ and $u:\mathbb{N}\rightarrow \mathbb{C}$ as 
	$$T(n)=\left\{\begin{array}{cl}
		1 & \mbox{if } n=1,2\\
		n-1 & \mbox{otherwise}
	\end{array}\right. ~~~\mbox{ and }~~~ u(n)=\left\{\begin{array}{cl}
	0 & \mbox{if } n=1\\
	1 & \mbox{otherwise}
\end{array}\right. .$$ 
By simple calculations we obtain that the Radon-Nikodym derivative $h_{T^k}$ is given by  $$h_{T^k}(n)=\left\{\begin{array}{cl}
	k+1 & \mbox{if } n=1\\
	1 & \mbox{if } n\geq 2
\end{array}.\right.$$ Further, we see that $L(p,q)(X_k)=\{0\}$  and $\chi_{\{1\}}\in \mathcal{N}(W_{u,T}^k)$ for every $k\in\mathbb{N}$. Consequently, $\mathcal{N}(W_{u,T}^k)\not\subseteq L(p,q)(X_k)$ for every natural number $k$.
\end{Exa}
	
	As a result of the previous two propositions, we obtain that the null space of $W_{u,T}^k$ is the collection of all complex valued measurable functions on $X$ having value zero almost everywhere on $X_k^c$.
	
	\begin{Pro}\label{25}
		Assume that a mapping $u\in L_\infty(\mu)$ and a non-singular measurable transformation $T:X\rightarrow X$ induce the weighted composition operator $W_{u,T}$ on Lorentz space $L(p,q)$, $1<p\leq \infty$, $1\leq q\leq \infty$. If $h_T=0$ a.e. on $(Supp(u))^c$ then $L(p,q)(X_k) = \mathcal{N}(W_{u,T}^k)$ for every $k\in\mathbb{N}\cup\{0\}$.   
	\end{Pro}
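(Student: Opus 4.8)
The plan is to obtain the claimed equality by simply combining the two inclusions already established in the preceding two propositions, since the hypotheses assumed here are precisely the conjunction of the hypotheses used there. No new computation is needed.

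First I would invoke the first proposition of this subsection. Since $u\in L_\infty(\mu)$ and the pair $(u,T)$ induces a bounded operator $W_{u,T}\in\mathfrak{B}(L(p,q))$, that proposition delivers the inclusion $L(p,q)(X_k)\subseteq \mathcal{N}(W_{u,T}^k)$ for every integer $k\geq 0$. I would then apply Proposition \ref{32}: the extra standing hypothesis that $h_T=0$ a.e. on $(Supp(u))^c$ is exactly what that result requires, and it yields the reverse inclusion $\mathcal{N}(W_{u,T}^k)\subseteq L(p,q)(X_k)$ for every non-negative integer $k$. Putting the two inclusions together gives $L(p,q)(X_k)=\mathcal{N}(W_{u,T}^k)$ for all $k\in\mathbb{N}\cup\{0\}$.

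Consequently there is no genuine obstacle to surmount at this stage, as the substantive work has been carried out beforehand. The inclusion from left to right rests on bounding the distribution function $\mu_{W_{u,T}^k h}$ by $\|u\|_\infty$ together with the non-singularity of each iterate $T^i$, while the reverse inclusion rests on the vanishing of $h_T$ off $Supp(u)$ combined with the absolute-continuity chain $\mu_{k+1}\ll\cdots\ll\mu_0$ recorded in \eqref{11}. The only point I would be careful to check is that the phrase ``$u$ and $T$ induce $W_{u,T}$ on $L(p,q)$'' indeed forces $W_{u,T}\in\mathfrak{B}(L(p,q))$, as demanded by both ingredient propositions; this is immediate from the definition of an induced weighted composition operator given in the preliminaries, where an induced $W_{u,T}$ is by hypothesis bounded with range in $L(p,q)$.
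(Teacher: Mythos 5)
Your proposal is correct and is exactly how the paper itself obtains this proposition: the statement appears there with no separate proof, introduced by the remark that it is ``a result of the previous two propositions,'' i.e.\ the inclusion $L(p,q)(X_k)\subseteq \mathcal{N}(W_{u,T}^k)$ from the first proposition combined with the reverse inclusion from Proposition~\ref{32}. Your additional check that ``induces $W_{u,T}$ on $L(p,q)$'' means bounded with range in $L(p,q)$ is consistent with the definition in the preliminaries, so nothing is missing.
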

	
	The measures $\mu$ and $\nu$ on the measureable space $(X,\mathcal{A})$ are said to be equivalent measures if $\mu\ll\nu$ and $\nu\ll\mu$. The forthcoming theorem generates the relationship between the equivalency of null spaces and measures in the following form. 

	\begin{Lem}\label{34}
		Let $W_{u,T}\in \mathfrak{B}(L(p,q))$, $1<p\leq \infty$, $1\leq q\leq \infty$, where $u\in L_\infty(\mu)$ and $h_T=0$ a.e. on $(Supp(u))^c$. A necessary and sufficient condition for $\mathcal{N}(W_{u,T}^k) = \mathcal{N}(W_{u,T}^{k+1})$ is that the  measures $\mu_k$ and $\mu_{k+1}$ are equivalent for every integer $k\geq 0$.
	\end{Lem}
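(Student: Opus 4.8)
The plan is to convert the statement about null spaces into a purely measure-theoretic one via Proposition \ref{25}, and then identify exactly when the resulting function subspaces coincide. Since the standing hypotheses ($u\in L_\infty(\mu)$, $W_{u,T}\in\mathfrak{B}(L(p,q))$, and $h_T=0$ a.e.\ on $(Supp(u))^c$) are precisely those of Proposition \ref{25}, for each $k\geq 0$ we have $\mathcal{N}(W_{u,T}^k)=L(p,q)(X_k)$ and $\mathcal{N}(W_{u,T}^{k+1})=L(p,q)(X_{k+1})$. Hence the lemma is equivalent to the assertion that $L(p,q)(X_k)=L(p,q)(X_{k+1})$ if and only if $\mu_k$ and $\mu_{k+1}$ are equivalent measures.

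First I would record two reductions. From the absolute continuity chain \eqref{11} we always have $\mu_{k+1}\ll\mu_k$, so the equivalence of $\mu_k$ and $\mu_{k+1}$ reduces to the single condition $\mu_k\ll\mu_{k+1}$. On the set side, applying $\mu_{k+1}\ll\mu_k$ to $X_k$ (where $h_{T^k}=0$, so $\mu_k(X_k)=\int_{X_k}h_{T^k}\,d\mu=0$) yields $\mu_{k+1}(X_k)=0$, i.e.\ $h_{T^{k+1}}=0$ a.e.\ on $X_k$; thus $X_k\subseteq X_{k+1}$ up to a $\mu$-null set. Consequently $X_{k+1}^c\subseteq X_k^c$ modulo a null set, giving the automatic inclusion $L(p,q)(X_k)\subseteq L(p,q)(X_{k+1})$, in agreement with the always-valid inclusion $\mathcal{N}(W_{u,T}^k)\subseteq\mathcal{N}(W_{u,T}^{k+1})$.

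Next I would characterize equality of the two subspaces by the single quantity $\mu(X_{k+1}\setminus X_k)$. If this vanishes, then $X_k$ and $X_{k+1}$ agree up to a null set and the subspaces coincide. For the converse, if $\mu(X_{k+1}\setminus X_k)>0$, then $\sigma$-finiteness lets me pick $C\subseteq X_{k+1}\setminus X_k$ with $0<\mu(C)<\infty$; the characteristic function $\chi_C$ lies in $L(p,q)$ (a finite-measure characteristic function has finite $\|\cdot\|_{pq}$ for $1<p\leq\infty$), vanishes on $X_{k+1}^c$ but not on $X_k^c$, and therefore belongs to $L(p,q)(X_{k+1})\setminus L(p,q)(X_k)$. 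Hence $L(p,q)(X_k)=L(p,q)(X_{k+1})$ if and only if $\mu(X_{k+1}\setminus X_k)=0$.

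Finally I would close the loop by proving $\mu(X_{k+1}\setminus X_k)=0 \iff \mu_k\ll\mu_{k+1}$. For the forward direction, if $\mu_{k+1}(A)=0$ then $h_{T^{k+1}}=0$ a.e.\ on $A$, so $A\subseteq X_{k+1}$ modulo a null set; combined with $\mu(X_{k+1}\setminus X_k)=0$ this forces $A\subseteq X_k$ modulo a null set, whence $\mu_k(A)=\int_A h_{T^k}\,d\mu=0$. For the reverse, I apply $\mu_k\ll\mu_{k+1}$ to the set $X_{k+1}\setminus X_k$: since $h_{T^{k+1}}=0$ on $X_{k+1}$ we have $\mu_{k+1}(X_{k+1}\setminus X_k)=0$, so $\mu_k(X_{k+1}\setminus X_k)=\int_{X_{k+1}\setminus X_k}h_{T^k}\,d\mu=0$; but $h_{T^k}>0$ on this set, which forces $\mu(X_{k+1}\setminus X_k)=0$. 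I expect this last equivalence to be the delicate step, since it requires moving carefully between the Radon--Nikodym derivatives $h_{T^k}$ and $h_{T^{k+1}}$ and the null sets they determine, crucially exploiting that $h_{T^k}$ is strictly positive off $X_k$. Chaining the four equivalences then delivers the lemma.
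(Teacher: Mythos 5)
Your proof is correct, but it takes a different route from the paper's. The paper never passes through the subspaces $L(p,q)(X_k)$ in this lemma: for the forward direction it re-runs the distribution-function computation of Proposition \ref{32} at the operator level (from $h\in\mathcal{N}(W_{u,T}^{k+1})$ it extracts $h\circ T^{k+1}=0$ a.e., applies $\mu_k\ll\mu_{k+1}$ to the set $\{h\neq 0\}$, and feeds this back into the estimate $\mu_{W_{u,T}^k h}(0)\leq \mu_k(\{|h|>0\})$), and for the converse it tests the equality of null spaces on $\chi_A$ for an arbitrary $\mu_{k+1}$-null set $A$, again invoking the computations of Proposition \ref{32} to conclude $\mu_k(A)=0$. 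You instead use Proposition \ref{25} as a black box to replace both null spaces by $L(p,q)(X_k)$ and $L(p,q)(X_{k+1})$, and then pivot everything through the purely set-theoretic condition $\mu(X_{k+1}\setminus X_k)=0$, which does not appear in the paper's argument at all. Your route cleanly separates the operator theory (confined to Proposition \ref{25}, which is where the hypothesis $h_T=0$ a.e.\ on $(Supp(u))^c$ is consumed) from the measure theory about the Radon--Nikodym derivatives $h_{T^k}$, and the resulting characterization transfers verbatim to any operator whose iterated kernels are the spaces $L(p,q)(X_k)$ --- for instance the operator $\widehat{W}_{u,T}$ treated later in the paper. The paper's inline argument, by contrast, isolates exactly which implication needs which inclusion of Proposition \ref{25} and avoids naming the intermediate sets. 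Each step of your chain checks out: the unconditional inclusion $X_k\subseteq X_{k+1}$ (mod $\mu$-null sets) from $\mu_{k+1}\ll\mu_k$ applied to $\mu_k(X_k)=0$; the $\sigma$-finiteness argument producing $\chi_C\in L(p,q)(X_{k+1})\setminus L(p,q)(X_k)$ when $\mu(X_{k+1}\setminus X_k)>0$; and the final equivalence with $\mu_k\ll\mu_{k+1}$, where the strict positivity of $h_{T^k}$ off $X_k$ is indeed the point that forces $\mu(X_{k+1}\setminus X_k)=0$.
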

	
	\begin{proof}
		Assume that $\mu_k\ll\mu_{k+1}\ll\mu_k$ for each non-negative integer $k$. Let $h\in \mathcal{N}(W_{u,T}^{k+1})$. Then, by following the same type of calculations as done in Proposition \ref{32}, we get that $h\circ T^{k+1}=0$ almost everywhere on $X$. Hence, by our assumption, $\mu_{k}(\{x\in X~:~h(x)\neq 0\})=0$. This yields that 		
		\begin{align}\label{e3}
		\mu_{W_{u,T}^k h}(0) &= \mu\big(\{x\in X\hspace{-1mm}:|h\circ T^k(x)|> 0\}\cap (\cap _{i=1}^{k}\{x\in X\hspace{-1mm}:|u\circ T^i(x)|> 0\})\big)\nonumber \\
		&\leq \mu_{k}(\{x\in X~:~|h(x)|> 0\}) = 0.
\end{align} 
	This provides that $\|W_{u,T}^k h\|_{pq}=0$. In other words, $h\in\mathcal{N}(W_{u,T}^k)$. Consequently, we get $\mathcal{N}(W_{u,T}^k)=\mathcal{N}(W_{u,T}^{k+1})$ for all $k\in \mathbb{N}\cup\{0\}$. 

For the converse part, we suppose that the condition $\mathcal{N}(W_{u,T}^k)=\mathcal{N}(W_{u,T}^{k+1})$ holds for each non-negative integer $k$. Let $\mu_{k+1}(A) = 0$ for any measurable set $A$. Then on replacing $h$ by $\chi_A$ in equation \eqref{e3}, it provides that $\chi_A\in\mathcal{N}(W_{u,T}^{k+1})$, and thus, $\chi_A\in\mathcal{N}(W_{u,T}^{k})$. Again, by the ideas used in calculations in Proposition \ref{32}, we deduce that $\mu_k(A)=0$. This shows that $\mu_k\ll \mu_{k+1}$. However, $ \mu_{k+1}\ll \mu_k$  is always true. This completes the proof.
	\end{proof}

In Lemma \ref{34}, if the search of non-negative finite integer $k$ is over for which $\mu_k\ll \mu_{k+1}\ll \mu_k$ then smallest such $k$ is equal to the ascent of a bounded weighted composition operator $W_{u,T}$ on $L(p,q)$, $1<p\leq \infty$, $1\leq q\leq \infty$. This is expressed in the following form.

	\begin{Thm} \label{46} 
Suppose that a pair of a non-singular transformation $T:X\mapsto X$ and a mapping $u:X\mapsto \mathbb{C}$ generates a bounded operator $W_{u,T}$ on $L(p,q)$, $1<p\leq \infty$, $1\leq q\leq \infty$. If $u\in L_\infty(\mu)$ and $h_T=0$ a.e. on $(Supp(u))^c$, then $\alpha(W_{u,T})=k$ if and only if $k$ is the least non-negative integer such that $\mu_k$ and $\mu_{k+1}$ are equivalent measures.   
	\end{Thm}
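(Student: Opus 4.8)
The plan is to read the theorem off directly from Lemma \ref{34} together with the definition of ascent, so that the entire argument reduces to matching two descriptions of the same ``least index''. First I would recall that $\alpha(W_{u,T})$ is, by definition, the infimum of the set
\[
S=\{k\in\mathbb{N}\cup\{0\}~:~\mathcal{N}(W_{u,T}^{k+1})=\mathcal{N}(W_{u,T}^{k})\},
\]
with the convention $\alpha(W_{u,T})=\infty$ when $S=\emptyset$. Since $S$ consists of non-negative integers, its infimum (when finite) is its least element; hence establishing $\alpha(W_{u,T})=k$ is the same as showing that $k$ is the smallest index at which two consecutive null spaces coincide.

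Next I would invoke the standing hypotheses $u\in L_\infty(\mu)$ and $h_T=0$ a.e. on $(Supp(u))^c$, which are precisely the assumptions of Lemma \ref{34}. That lemma supplies, for every non-negative integer $k$, the equivalence
\[
\mathcal{N}(W_{u,T}^{k})=\mathcal{N}(W_{u,T}^{k+1})\iff \mu_k\text{ and }\mu_{k+1}\text{ are equivalent measures}.
\]
Because this biconditional holds index by index, the set $S$ coincides exactly with $S'=\{k~:~\mu_k\text{ and }\mu_{k+1}\text{ are equivalent}\}$, and therefore $\inf S=\inf S'$.

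To finish I would spell out both implications explicitly. For the forward direction, if $\alpha(W_{u,T})=k$ then $k$ is the least element of $S$; applying Lemma \ref{34} at index $k$ yields that $\mu_k$ and $\mu_{k+1}$ are equivalent, while for each $j<k$ the failure $\mathcal{N}(W_{u,T}^{j+1})\neq\mathcal{N}(W_{u,T}^{j})$ forces $\mu_j$ and $\mu_{j+1}$ to be non-equivalent, so $k$ is the least element of $S'$. The reverse implication runs symmetrically: if $k$ is the least index for which $\mu_k$ and $\mu_{k+1}$ are equivalent, Lemma \ref{34} converts this into $\mathcal{N}(W_{u,T}^{k+1})=\mathcal{N}(W_{u,T}^{k})$ together with strict failure below $k$, whence $\alpha(W_{u,T})=k$.

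I do not expect any genuine technical obstacle here, since all the analytic content, namely the Radon--Nikodym computations relating the null spaces $\mathcal{N}(W_{u,T}^{k})$ to the measures $\mu_k$, has already been carried out in Proposition \ref{32} and Lemma \ref{34}. The only point that deserves a sentence of care is the passage $\inf S=\inf S'$ in the boundary cases: I would note that the standard nesting $\mathcal{N}(W_{u,T}^{j})\subseteq\mathcal{N}(W_{u,T}^{j+1})$ makes the equality of consecutive null spaces propagate upward, so that ``least non-negative integer'' is meaningful, and that when $S'=\emptyset$ one correctly recovers $\alpha(W_{u,T})=\infty$.
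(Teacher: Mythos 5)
Your proposal is correct and follows exactly the route the paper intends: the paper states Theorem \ref{46} with no separate proof, treating it as an immediate consequence of Lemma \ref{34} (the index-by-index equivalence $\mathcal{N}(W_{u,T}^{k})=\mathcal{N}(W_{u,T}^{k+1})\iff \mu_k\sim\mu_{k+1}$) combined with the definition of ascent as the least such index. Your explicit matching of the two ``least index'' sets, including the empty-set/infinite-ascent convention, is precisely the omitted bookkeeping and introduces no new ideas beyond what the paper relies on.
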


Lemma 2.1 and Theorem 2.2 of \cite{bg} discuss the ascent of composition operators on Lorentz spaces. When we combine these results with the findings of Proposition \ref{25}, Lemma \ref{34} and Theorem \ref{46}, we finally arrive to the following corollaries:

\begin{Cor}
Let $u$ and $T$ be such that  $C_T, W_{u,T}\in \mathfrak{B}(L(p,q))$, $1<p\leq \infty$, $1\leq q\leq \infty$. If $u\in L_{\infty}(\mu)$ and $h_T=0$ a.e. on $(Supp(u))^c$ then $\mathcal{N}(C_T^k)=L(p,q)(X_k)=\mathcal{N}(W_{u,T}^k)$ for all $k\in\mathbb{N}\cup\{0\}$.		
\end{Cor}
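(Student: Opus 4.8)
The plan is to read off both equalities from results already in hand, since the corollary is exactly the overlap of Proposition \ref{25} with the composition-operator theory of \cite{bg}. The rightmost equality $L(p,q)(X_k)=\mathcal{N}(W_{u,T}^k)$ is literally the conclusion of Proposition \ref{25}, and all of its hypotheses---$u\in L_\infty(\mu)$, $W_{u,T}\in\mathfrak{B}(L(p,q))$, and $h_T=0$ a.e. on $(Supp(u))^c$---are among the assumptions of the corollary. So for that half there is nothing to prove beyond citing Proposition \ref{25}.

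For the leftmost equality $\mathcal{N}(C_T^k)=L(p,q)(X_k)$ I would exploit the fact that $C_T=W_{1,T}$, i.e. the composition operator is the weighted composition operator with weight $u\equiv 1$. I would then apply Proposition \ref{25} to this special weight: $u\equiv 1$ certainly lies in $L_\infty(\mu)$, $C_T\in\mathfrak{B}(L(p,q))$ is assumed, and since $Supp(u)=X$ the set $(Supp(u))^c$ is empty, so the requirement $h_T=0$ a.e. on $(Supp(u))^c$ holds vacuously. Because $X_k=\{x\in X~|~h_{T^k}(x)=0\}$ depends only on $T$ and not on the weight, the set $L(p,q)(X_k)$ is unchanged, and Proposition \ref{25} yields $\mathcal{N}(C_T^k)=L(p,q)(X_k)$. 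This is the content the paper attributes to Lemma 2.1 and Theorem 2.2 of \cite{bg}; invoking those results is an equally valid route.

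As an independent check I would also verify the left equality by the direct change-of-variables argument underlying Proposition \ref{32}: a function $f$ lies in $\mathcal{N}(C_T^k)$ precisely when $f\circ T^k=0$ a.e., which by the definition of $\mu_k$ means $\mu_k(\{x\in X~|~f(x)\neq 0\})=0$; and since $\mu_k(A)=\int_A h_{T^k}(x)\,d\mu(x)$ by \eqref{12}, this is equivalent to $h_{T^k}=0$ a.e. on $\{x\in X~|~f(x)\neq 0\}$, i.e. $f=0$ a.e. on $X_k^c$, i.e. $f\in L(p,q)(X_k)$. Chaining the two equalities then delivers $\mathcal{N}(C_T^k)=L(p,q)(X_k)=\mathcal{N}(W_{u,T}^k)$ for every $k\in\mathbb{N}\cup\{0\}$.

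I do not anticipate a genuine obstacle here, as the result is a bookkeeping corollary. The only point warranting care is confirming that the support hypothesis of Proposition \ref{25} degenerates correctly when $u\equiv 1$ (so that the composition-operator case is a true specialization), together with the observation that $X_k$---hence $L(p,q)(X_k)$---is weight-independent, which is exactly what allows the two applications of Proposition \ref{25} to share the common middle term.
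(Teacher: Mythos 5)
Your proposal is correct, and both equalities hold for the reasons you give; the difference from the paper lies in how the composition-operator half is sourced. The paper obtains $\mathcal{N}(C_T^k)=L(p,q)(X_k)$ by importing Lemma 2.1 and Theorem 2.2 of \cite{bg} and combining them with Proposition \ref{25}, so its proof of the corollary is essentially a citation to external results on $C_T$. You instead derive that equality internally, by noting $C_T=W_{1,T}$ and applying Proposition \ref{25} with the weight $u\equiv 1$: the hypotheses degenerate correctly ($1\in L_\infty(\mu)$, $(Supp(1))^c=\emptyset$ makes the condition $h_T=0$ a.e. on $(Supp(u))^c$ vacuous), and since $X_k=\{x\in X~|~h_{T^k}(x)=0\}$ depends only on $T$, the middle term $L(p,q)(X_k)$ is the same in both applications. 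Your supplementary direct argument (namely $f\in\mathcal{N}(C_T^k)$ iff $f\circ T^k=0$ a.e. iff $\mu_k(\{x\in X~|~f(x)\neq 0\})=\int_{\{f\neq 0\}}h_{T^k}\,d\mu=0$ iff $f=0$ a.e. on $X_k^c$) is also sound and makes the composition half independent even of Proposition \ref{25}. What each approach buys: the paper's route places the corollary in line with the existing composition-operator literature, while yours is self-contained within the paper's own machinery and makes transparent that the $C_T$ statement is nothing but the $u\equiv 1$ specialization of the weighted theory; this is arguably the cleaner logical organization, though it proves slightly more than the corollary strictly needs.
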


\begin{Cor}
	Let $C_T, W_{u,T}\in \mathfrak{B}(L(p,q))$,  where $u\in L_{\infty}(\mu)$ and $h_T=0$ a.e. on $(Supp(u))^c$. Then the following are equivalent conditions:
		\begin{enumerate}[(a)]
			\item $\mu_{k+1}\ll \mu_{k}\ll\mu_{k+1}$
			\item $\mathcal{N}(C_{T}^k)=\mathcal{N}(C_{T}^{k+1})$
			\item $\mathcal{N}(W_{u,T}^k)=\mathcal{N}(W_{u,T}^{k+1})$
		\end{enumerate}
		\end{Cor}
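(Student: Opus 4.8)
The plan is to obtain the three-way equivalence by chaining together two results already proved in this section, so that essentially no new computation is required. Concretely, I would establish $(a)\Leftrightarrow(c)$ and $(b)\Leftrightarrow(c)$ separately and then combine them.

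For $(a)\Leftrightarrow(c)$ I would invoke Lemma \ref{34} verbatim. The standing hypotheses of the present corollary, namely $W_{u,T}\in\mathfrak{B}(L(p,q))$, $u\in L_\infty(\mu)$, and $h_T=0$ a.e. on $(Supp(u))^c$, are exactly those of Lemma \ref{34}, and that lemma asserts that $\mathcal{N}(W_{u,T}^k)=\mathcal{N}(W_{u,T}^{k+1})$ holds if and only if $\mu_k$ and $\mu_{k+1}$ are equivalent measures, i.e. $\mu_{k+1}\ll\mu_k\ll\mu_{k+1}$. This is precisely the equivalence of $(a)$ and $(c)$.

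For $(b)\Leftrightarrow(c)$ I would use the preceding corollary, which under the same hypotheses gives the identification $\mathcal{N}(C_T^j)=L(p,q)(X_j)=\mathcal{N}(W_{u,T}^j)$ for every $j\in\mathbb{N}\cup\{0\}$. Taking $j=k$ and $j=k+1$ yields $\mathcal{N}(C_T^k)=\mathcal{N}(W_{u,T}^k)$ and $\mathcal{N}(C_T^{k+1})=\mathcal{N}(W_{u,T}^{k+1})$, so the equality $\mathcal{N}(C_T^k)=\mathcal{N}(C_T^{k+1})$ holds if and only if $\mathcal{N}(W_{u,T}^k)=\mathcal{N}(W_{u,T}^{k+1})$. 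Alternatively, $(a)\Leftrightarrow(b)$ may be read off directly from Lemma 2.1 and Theorem 2.2 of \cite{bg}, which supply the composition-operator analogue of Lemma \ref{34}; either route closes the loop.

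Combining the two biconditionals shows that $(a)$, $(b)$, and $(c)$ are mutually equivalent, which completes the proof. Since the argument is a pure synthesis of Lemma \ref{34} and the preceding corollary, there is no substantive obstacle; the only point requiring care is to confirm that the standing hypotheses match those of both cited results simultaneously for the indices $k$ and $k+1$, which they plainly do.
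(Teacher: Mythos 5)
Your proposal is correct and follows essentially the same route as the paper, which states these corollaries as direct consequences of combining Lemma \ref{34} (giving $(a)\Leftrightarrow(c)$) with the preceding corollary $\mathcal{N}(C_T^j)=L(p,q)(X_j)=\mathcal{N}(W_{u,T}^j)$ and the composition-operator results of \cite{bg} (giving $(b)\Leftrightarrow(c)$, respectively $(a)\Leftrightarrow(b)$). Your only added care --- checking that the standing hypotheses match those of both cited results at the indices $k$ and $k+1$ --- is exactly what the paper leaves implicit.
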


	\begin{Cor}
		Assume that the non-singular measurable transformation $T:X\rightarrow X$ and the mapping $u:X\rightarrow \mathbb{C}$ generates the bounded operators $C_T$ and $W_{u,T}$ on $L(p,q)$, $1<p\leq \infty$, $1\leq q\leq \infty$. Further, let $u\in L_\infty(\mu)$ and $h_T=0$ a.e. on $(Supp(u))^c$. Then the following are equivalent:
		\begin{enumerate}[(a)]
			\item $\alpha(C_T)$, the ascent of $C_T$, is $k$
			\item $\alpha(W_{u,T})$, the ascent of $W_{u,T}$, is $k$
			\item  $k$ is the least non-negative integer such that the measures $\mu_k$ and $\mu_{k+1}$ are equivalent.
		\end{enumerate}    
\end{Cor}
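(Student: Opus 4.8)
The plan is to reduce the whole statement to the null-space/measure dictionary already built up, by exploiting the fact that a composition operator is a weighted composition operator with trivial weight. First I would observe that $C_T=W_{1,T}$, the weighted composition operator with weight $u\equiv 1$. Since $Supp(1)=X$, the hypothesis ``$h_T=0$ a.e. on $(Supp(u))^c$'' is vacuous for this choice, and $1\in L_\infty(\mu)$ trivially, so Proposition \ref{25} applies to $C_T$ as well and yields $\mathcal{N}(C_T^k)=L(p,q)(X_k)$ for every $k\in\mathbb{N}\cup\{0\}$. The key point to record here is that the sets $X_k=\{x:h_{T^k}(x)=0\}$ and the measures $\mu_k=\mu\circ T^{-k}$ depend only on $T$ and not on the weight; hence the space $L(p,q)(X_k)$ is literally the same object whether we are analysing $C_T$ or $W_{u,T}$.

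Combining this with Proposition \ref{25} applied to $W_{u,T}$ gives $\mathcal{N}(C_T^k)=L(p,q)(X_k)=\mathcal{N}(W_{u,T}^k)$ for all $k$. Thus the two null-space chains coincide term by term, so $\mathcal{N}(C_T^k)=\mathcal{N}(C_T^{k+1})$ holds if and only if $\mathcal{N}(W_{u,T}^k)=\mathcal{N}(W_{u,T}^{k+1})$. Because the ascent of an operator is by definition the least non-negative integer at which its null-space chain stabilises, this equality of chains forces $\alpha(C_T)=\alpha(W_{u,T})$, which is exactly the equivalence of (a) and (b).

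To close the loop with (c) I would invoke Theorem \ref{46} for $W_{u,T}$ under the standing hypotheses: it states that $\alpha(W_{u,T})=k$ if and only if $k$ is the least non-negative integer for which $\mu_k$ and $\mu_{k+1}$ are equivalent, giving (b) $\Leftrightarrow$ (c). Together with (a) $\Leftrightarrow$ (b) this establishes that the three conditions are equivalent. (One could equally apply Theorem \ref{46} to $C_T=W_{1,T}$ to obtain (a) $\Leftrightarrow$ (c) directly, since the measures $\mu_k$ appearing there are independent of the weight.) I expect the argument to be essentially bookkeeping; the only step demanding care is verifying that Proposition \ref{25} and Theorem \ref{46} genuinely apply to $C_T$, i.e.\ that the choice $u\equiv 1$ meets all hypotheses and that the measure-theoretic data $X_k$ and $\mu_k$ are truly weight-independent, and no substantive analytic obstacle arises.
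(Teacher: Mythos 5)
Your proof is correct, and the delicate point you flag at the end is handled properly: with $u\equiv 1$ we have $1\in L_\infty(\mu)$, $(Supp(1))^c=\emptyset$ makes the hypothesis ``$h_T=0$ a.e.\ on $(Supp(u))^c$'' vacuous, and the data $X_k=\{x:h_{T^k}(x)=0\}$, $\mu_k=\mu\circ T^{-k}$ depend only on $T$, so $L(p,q)(X_k)$ really is the same space in both null-space chains. Where you differ from the paper: the paper does not specialize its own weighted results to the trivial weight. Instead it imports the composition-operator half of the dictionary, namely $\mathcal{N}(C_T^k)=L(p,q)(X_k)$ and the criterion that $\alpha(C_T)$ is the least $k$ with $\mu_k$ and $\mu_{k+1}$ equivalent, from Lemma 2.1 and Theorem 2.2 of the authors' earlier paper \cite{bg}, and then combines those external facts with Proposition \ref{25}, Lemma \ref{34} and Theorem \ref{46} to get (a) $\Leftrightarrow$ (c) $\Leftrightarrow$ (b), with (c) as the pivot. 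Your route is self-contained within the present paper: viewing $C_T$ as $W_{1,T}$ lets Proposition \ref{25} deliver $\mathcal{N}(C_T^k)=L(p,q)(X_k)=\mathcal{N}(W_{u,T}^k)$, so the two chains coincide term by term and (a) $\Leftrightarrow$ (b) drops out before any measure-theoretic criterion is invoked, with Theorem \ref{46} then supplying (b) $\Leftrightarrow$ (c). The paper's version buys continuity with the prior literature on composition operators; yours buys independence from \cite{bg} and makes transparent the structural reason the two ascents must agree. Both assemblies rest on the same underlying fact, that under these hypotheses ascent is read off from stabilization of the chain $L(p,q)(X_k)$, equivalently from the least $k$ at which $\mu_k$ and $\mu_{k+1}$ become equivalent.
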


Next, we discuss conditions which assure the equivalency of the measures $\mu_k$ and $\mu_{k+1}$ so that the ascent of weighted composition operators on Lorentz spaces is finite. Now we write $X_k^c\subseteq T^{k+1}(X_k^c)$ in the sense that $T^{k+1}(X_k^c)$ contains almost every element of $X_k^c$, i.e. $\mu(X_k^c\setminus T^{k+1}(X_k^c))=0$, then we have the following.

	\begin{Thm}\label{410}
		Suppose that the pair $(u,T)$ induces an operator $W_{u,T}\in\mathfrak{B}(L(p,q))$, $1<p\leq \infty$, $1\leq q\leq \infty$. If $\mu\circ T(A)\leq \mu(A)$ for each $A\in\mathcal{A}$ and $X_k^c= X\setminus X_k\subseteq T^{k+1}(X_k^c)$ for some $k\in\mathbb{N}\cup \{0\}$ then the measures $\mu_k$ and $\mu_{k+1}$ are equivalent. In fact, $\mu_m\ll\mu_{k}\ll\mu_m$ for all $m\geq k$.
	\end{Thm}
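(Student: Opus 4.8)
The plan is to prove the asserted equivalence by reducing it to a single absolute-continuity statement and then bootstrapping to all $m\ge k$. Since the chain \eqref{11} already gives $\mu_{k+1}\ll\mu_k$ for every $k$, the only thing to establish for the first assertion is the reverse relation $\mu_k\ll\mu_{k+1}$. To make this tractable I would first record the elementary characterization that, because $\mu_k(A)=\int_A h_{T^k}\,d\mu$ and $X_k^c=\{h_{T^k}>0\}$, one has $\mu_k(A)=0$ if and only if $\mu(A\cap X_k^c)=0$. Thus $\mu_k\ll\mu_{k+1}$ amounts to the implication: $\mu_{k+1}(A)=0\Rightarrow\mu(A\cap X_k^c)=0$.

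The core of the argument is a set-covering inclusion combined with an iterated measure bound. First I would iterate the hypothesis $\mu\circ T(A)\le\mu(A)$ to obtain $\mu(T^{m}(E))\le\mu(E)$ for every $E\in\mathcal{A}$ and every $m$, by induction on $m$. Next, fix $A$ with $\mu_{k+1}(A)=0$ and set $E=T^{-(k+1)}(A)\cap X_k^c$. Using the covering hypothesis $X_k^c\subseteq T^{k+1}(X_k^c)$ (valid up to a $\mu$-null set), I would show that almost every $x\in A\cap X_k^c$ lies in $T^{k+1}(E)$: such an $x$ has a preimage $y\in X_k^c$ with $T^{k+1}(y)=x$, and since $T^{k+1}(y)=x\in A$ we get $y\in T^{-(k+1)}(A)$, hence $y\in E$ and $x=T^{k+1}(y)\in T^{k+1}(E)$. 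Therefore $A\cap X_k^c\subseteq T^{k+1}(E)$ modulo a null set, and the iterated bound yields
\[
\mu(A\cap X_k^c)\le \mu\big(T^{k+1}(E)\big)\le \mu(E)\le \mu\big(T^{-(k+1)}(A)\big)=\mu_{k+1}(A)=0 .
\]
This gives $\mu_k\ll\mu_{k+1}$, and together with $\mu_{k+1}\ll\mu_k$ shows that $\mu_k$ and $\mu_{k+1}$ are equivalent.

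For the final assertion I would not reuse the geometric hypothesis but argue by induction on $m\ge k$, exploiting the pushforward identity $\mu_{j+1}(A)=\mu_j(T^{-1}(A))$, which follows at once from $T^{-(j+1)}=T^{-1}\circ T^{-j}$. Assuming $\mu_j\sim\mu_{j+1}$, for any $A$ we have $\mu_{j+2}(A)=\mu_{j+1}(T^{-1}(A))$ and $\mu_{j+1}(A)=\mu_j(T^{-1}(A))$; applying the equivalence $\mu_j\sim\mu_{j+1}$ to the set $T^{-1}(A)$ shows $\mu_{j+2}(A)=0\iff\mu_{j+1}(A)=0$, i.e. $\mu_{j+1}\sim\mu_{j+2}$. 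Starting from the base case $\mu_k\sim\mu_{k+1}$ established above, this propagates the equivalence up the chain and delivers $\mu_m\ll\mu_k\ll\mu_m$ for all $m\ge k$.

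The step I expect to be most delicate is the covering inclusion together with the measure estimate around it: one must handle $X_k^c\subseteq T^{k+1}(X_k^c)$ in its almost-everywhere form (discarding the null set $X_k^c\setminus T^{k+1}(X_k^c)$) and, more seriously, make sense of $\mu(T^{k+1}(E))$ when forward images of measurable sets need not be measurable. I would address the latter by interpreting $\mu\circ T$ through outer measure, as is implicitly required even to state the hypothesis $\mu\circ T(A)\le\mu(A)$, so that monotonicity and the iterated bound remain valid; the remaining manipulations are routine.
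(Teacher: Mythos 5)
Your proof is correct, and at its core it uses the same two ingredients as the paper's own argument: the covering hypothesis $X_k^c\subseteq T^{k+1}(X_k^c)$ to realize the relevant subset of $X_k^c$ as a forward image under $T^{k+1}$, and the iterated bound $\mu(T^{m}(E))\le\mu(E)$ obtained from $\mu\circ T(A)\le\mu(A)$. The packaging differs in ways worth noting. The paper splits $A$ into $A_1=\{x\in A : h_{T^{k+1}}(x)>0\}$ and $A_2=\{x\in A : h_{T^k}(x)>0,\ h_{T^{k+1}}(x)=0\}$, argues by contradiction, and invokes $\sigma$-finiteness to extract a finite-measure subset $B\subseteq A_2$ for which $B=T^{k+1}(T^{-(k+1)}(B))$; you instead prove the direct inclusion $A\cap X_k^c\subseteq T^{k+1}(E)$ with $E=T^{-(k+1)}(A)\cap X_k^c$ and read off $\mu(A\cap X_k^c)=0$ in a single chain of inequalities, with no contradiction and no appeal to $\sigma$-finiteness. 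More substantively, for the tail claim $\mu_m\ll\mu_k\ll\mu_m$ for all $m\ge k$, the paper merely asserts ``by the principle of mathematical induction''; your argument via the pushforward identity $\mu_{j+1}(A)=\mu_j(T^{-1}(A))$ is the right way to carry out that induction, and it is important (as you observe) that it does not reuse the covering hypothesis, since that hypothesis is assumed only for the single index $k$. Finally, both proofs face the same technicality that forward images such as $T^{k+1}(E)$ need not be measurable; the paper is silent on this, while you at least flag the outer-measure interpretation, which is the honest way to make both versions rigorous.
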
 
	
	\begin{proof}
	The goal is to show that $\mu_k\ll\mu_{k+1}$, since $\mu_{k+1}\ll \mu_k$ is evident. Let $A$ be a measurable set satisfying  $\mu_{k+1}(A)=0$. Then $\mu(A_1)=0$, where $A_1=\{x\in A~|~h_{T^{k+1}}(x)>0\}$. Take $A_2=\{x\in A~|~h_{T^k}(x)>0 \text{ and } h_{T^{k+1}}(x)=0\}$. Assume $\mu(A_2)>0$. Then the condition $\sigma-$finite on the measure $\mu$ provides a non-zero finite measurable subset $B$ of $A_2$. By the given hypothesis, we have $B =  T^{k+1}(T^{-(k+1)}(B))$, as $A_2\subseteq X_k^c$, which yields $\mu(B)\leq \mu(T^{-(k+1)}(B))=\int_B h_{T^{k+1}}(x)\,d\mu(x)=0$, a contradiction. Hence, the assumption $\mu(A_2)>0$ is not true. Thus, we obtain that $\mu(A_2)=0$. This further indicates that $\mu_k(A)=0$. Hence, we are done. Further, by applying the technique of principle of mathematical induction, we can easily see that $\mu_m\ll\mu_{k}\ll\mu_m$ for all $m\geq k$.
	\end{proof}

 As the consequence of the Theorem \ref{46} and Theorem \ref{410}, we deduce the following.
 	
	\begin{Thm}\label{411} 
	Let $T:X\mapsto X$ be a non-singular measurable transformation and $u$ be a complex valued measurable function defined on $X$ such that $W_{u,T}\in\mathfrak{B}(L(p,q))$, $1<p\leq \infty$, $1\leq q\leq \infty$ and $\mu\circ T(A)\leq \mu(A)$ for each $A\in\mathcal{A}$. Also, let the Radon-Nikodym derivative $h_T = 0$ a.e. on $(Supp(u))^c$ and $u\in L_\infty(\mu)$. Then, a sufficient condition for $W_{u,T}$ to have ascent a non-negative integer $k$ is that $k$ is the smallest integer such that $X_k^c\subseteq T^{k+1}(X_k^c)$, where $X_k^c= X\setminus X_k$.  
\end{Thm}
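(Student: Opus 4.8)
The plan is to obtain this statement as an immediate corollary of Theorem~\ref{46} and Theorem~\ref{410}, by passing from the geometric containment condition to the language of measure equivalence. Let $k$ denote the smallest non-negative integer for which $X_k^c\subseteq T^{k+1}(X_k^c)$. All the standing hypotheses needed by both cited results are in force here, namely $u\in L_\infty(\mu)$, $h_T=0$ a.e. on $(Supp(u))^c$, $W_{u,T}\in\mathfrak{B}(L(p,q))$, and $\mu\circ T(A)\le\mu(A)$ for every $A\in\mathcal{A}$. So the strategy is simply to feed the data of the present theorem into those two results and read off the ascent through Theorem~\ref{46}, which identifies $\alpha(W_{u,T})$ with the least index at which consecutive measures become equivalent.

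First I would apply Theorem~\ref{410} at the index $k$. The hypothesis $\mu\circ T(A)\le\mu(A)$ together with $X_k^c\subseteq T^{k+1}(X_k^c)$ is exactly what that theorem requires, so it yields that $\mu_k$ and $\mu_{k+1}$ are equivalent (indeed $\mu_m\ll\mu_k\ll\mu_m$ for all $m\ge k$). Since the hypotheses of Theorem~\ref{46} are met, and $k$ is now exhibited as an index at which the consecutive measures are equivalent, that theorem already forces $\alpha(W_{u,T})\le k$: the \emph{least} such index cannot exceed $k$. This half of the argument is purely a matter of citing the two results and checking that their hypotheses are satisfied.

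The remaining, and genuinely delicate, point is the reverse inequality $\alpha(W_{u,T})\ge k$. Because Theorem~\ref{46} characterizes the ascent as the least non-negative integer $j$ with $\mu_j$ and $\mu_{j+1}$ equivalent, I must rule out this equivalence for every $j<k$. The minimality of $k$ hands me only the failure of the geometric containment, $X_j^c\not\subseteq T^{j+1}(X_j^c)$, for $j<k$, and I expect the transfer from this failure to the \emph{non}-equivalence of $\mu_j$ and $\mu_{j+1}$ to be the main obstacle, since Theorem~\ref{410} supplies only the forward implication. The natural route is to run the argument of Theorem~\ref{410} in contrapositive form: starting from $\mu\big(X_j^c\setminus T^{j+1}(X_j^c)\big)>0$, use $\sigma$-finiteness to extract a finite nonzero piece $B$ of $X_j^c\setminus T^{j+1}(X_j^c)$ and the inequality $\mu\circ T(A)\le\mu(A)$ to produce a set witnessing $\mu_j\not\ll\mu_{j+1}$, equivalently $\mu\big(X_j^c\setminus X_{j+1}^c\big)>0$. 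Closing precisely this gap — linking the image $T^{j+1}(X_j^c)$ to the support $X_{j+1}^c$ of $h_{T^{j+1}}$ — is where the real care is needed; once it is in place, the non-equivalence below level $k$ follows and Theorem~\ref{46} delivers $\alpha(W_{u,T})=k$, completing the proof.
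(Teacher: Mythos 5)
Your first half is exactly how the paper begins: Theorem~\ref{410} applied at the index $k$ yields $\mu_k\ll\mu_{k+1}\ll\mu_k$, and Theorem~\ref{46} then gives $\alpha(W_{u,T})\leq k$. The problem is the second half, which you correctly isolate as the real content but then leave unproven --- and that step is not a detail: it is essentially the entire body of the paper's proof. Moreover, the ingredients you name would not close it. The natural attempt --- take a finite-measure set $C\subseteq X_j^c\setminus T^{j+1}(X_j^c)$ and hope that $C$ itself witnesses $\mu_j\not\ll\mu_{j+1}$ --- fails: certainly $\mu_j(C)=\int_C h_{T^j}\,d\mu>0$, but $\mu_{j+1}(C)=\mu\big(T^{-(j+1)}(C)\big)$ need not vanish, because all that disjointness of $C$ from $T^{j+1}(X_j^c)$ gives you is $T^{-(j+1)}(C)\subseteq X_j$, and $X_j$ may well have positive $\mu$-measure. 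Nor is the inequality $\mu\circ T(A)\leq\mu(A)$ the relevant tool in this direction; that hypothesis is what powers Theorem~\ref{410}, i.e.\ the upper bound you already have.

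What is actually needed --- and what the paper supplies, arguing by contradiction from an assumed equivalence $\mu_p\sim\mu_{p+1}$ with $p<k$ --- is a mechanism for moving \emph{up} the chain of measures and then back \emph{down}. First, the assumed equivalence propagates upward: by the chain rule for Radon--Nikodym derivatives, $h_{T^p}=\frac{d\mu_p}{d\mu_m}\,h_{T^m}$ and $h_{T^m}=\frac{d\mu_m}{d\mu_p}\,h_{T^p}$, so $X_p=X_m$ and $\mu_p\sim\mu_m$ for every $m\geq p$; in particular the hypothesis at level $k$ transfers down to $X_p^c\subseteq T^{k+1}(X_p^c)$. Second, setting $B:=T^{-(p+1)}(C)\cap X_p$, one has $\mu_p(B)=0$ (as $B\subseteq X_p$), hence $\mu_{p+1}(B)=0$ by the standing chain $\mu_{p+1}\ll\mu_p$, and hence $\mu_{2p+2}(C)\leq\mu\big(T^{-(p+1)}(B)\big)=\mu_{p+1}(B)=0$: the decisive computation happens at level $2p+2$, not at level $p+1$. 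Only now does the propagated equivalence $\mu_{2p+2}\sim\mu_p$ let you return to $\mu_p(C)=0$, whence $\mu(C)=0$ because $h_{T^p}>0$ on $C\subseteq X_p^c$ --- the contradiction showing $X_p^c\subseteq T^{p+1}(X_p^c)$, which violates the minimality of $k$ and finishes the proof via Theorem~\ref{46}. Your proposal points at the correct statement and even at the contrapositive framing, but it contains neither the propagation of equivalence up the chain nor the two-fold composition $T^{-(p+1)}\circ T^{-(p+1)}$ that makes the contradiction work; as written it is a plan with the crucial idea missing rather than a proof.
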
 

\begin{proof}
	By Theorem \ref{46} and Theorem \ref{410}, we get that the ascent of $W_{u,T}$ is atmost $k$. Now assume that $k$ is the least non-negative integer satisfying $X_k^c\subseteq T^{k+1}(X_k^c)$. To show $\alpha(W_{u,T})=k$, we claim that $k$ is the least non-negative integer such that the measures $\mu_k$ and $\mu_{k+1}$ are equivalent. On contrary, assume that $\mu_p$ and $\mu_{p+1}$ are equivalent measures for some non-negative integer $p$ strictly less than $k$. Using the equation \eqref{11}, we have $\mu_{p}\ll \mu_{m}\ll \mu$ and $\mu_{m}\ll \mu_p\ll \mu$ for every $m \geq p$. Thus, by the chain rule,
	\begin{align*}
		h_{T^p}(x)&= \frac{d\mu_p}{d\mu_{m}}(x)\cdot h_{T^{m}}(x),\\
		h_{T^{m}}(x)&= \frac{d\mu_{m}}{d\mu_{p}}(x)\cdot h_{T^{p}}(x).
	\end{align*}
As a result, we obtain $X_p=X_{m}$ which further implies that $X_p^c=X_{m}^c$. In particular, for $m=k$ and by given hypothesis, we obtain that $X_p^c\subseteq T^{k+1}(X_p^c)$. Our target is to show that the condition $X_p^c\subseteq T^{p+1}(X_p^c)$ holds, which yields the contradiction as $k$ is the least integer satisfying that condition. Suppose that $\mu\big(X_p^c\setminus T^{p+1}(X_p^c)\big)>0$. Since $\mu$ is $\sigma-$finite, there exists a measurable subset $C$ of $X_p^c\setminus T^{p+1}(X_p^c)$ having non-zero finite measure. The structure of $C$ indicates that every element $y\in C$ is of type $T^{k+1}(x)=T^{p+1}(T^{k-p}(x))$ where $T^{k-p}(x)\in X_p$ and $x\in X_p^c$, in other words, $y=T^{p+1}(z)$ for some $z\in X_p$. Take $B=T^{-(p+1)}(C)\cap X_p$. Then $B$ is measurable set with $\mu_p(B)=0$. This provides 
$$\chi_C\circ T^{p+1}(x)=\left\{\begin{array}{cc}
	1 & x\in B\\
	0 & \text{elsewhere}
\end{array}\right.\leq ~\chi_B(x).$$ 
Therefore, $\chi_C\circ T^{p+1}(T^{p+1}(x))\leq \chi_B\circ T^{p+1}(x)$ for every $x\in X$. We know that for each measurable set $A\in\mathcal{A}$
\begin{align*}
	\|\chi_A\|_{pq}=\left\{\begin{array}{cc}
		(p')^{1/q} (\mu(A))^{1/p}, & 1<p<\infty, 1\leq q<\infty,\\
		(\mu(A))^{1/p}, & 1<p\leq \infty, q=\infty,
	\end{array}\right.
\end{align*}
where $1/p+1/p'=1$. Finally, by using the relation $\|\chi_{T^{-(2p+2)}(C)}\|_{pq}\leq \|\chi_{T^{-(p+1)}(B)}\|_{pq}$, we get $\mu_{2p+2}(C)=0$, and hence, $\mu_p(C)=0$. This yields $\mu(C)=0$, a contradiction. This implies that $X_p^c\subseteq T^{p+1}(X_p^c)$. This contradicts the fact that $k$ is the least non-negative integer satisfying $X_k^c\subseteq T^{k+1}(X_k^c)$. This proves that $k$ is the least non-negative integer such that measures $\mu_k$ and $\mu_{k+1}$ are equivalent. This proves that the ascent of $W_{u,T}$ is equal to $k$.
\end{proof}

	 Before we proceed, it is to be noted that the assumption $h_T=0$ a.e. on $(Supp(u))^c$, in the Theorem \ref{411},  can not be dropped. This we can see through the following example.
 	  
 	 \begin{Exa}
 	 	On the measure space $(\mathbb{N},2^\mathbb{N},\mu)$, if we define $T:\mathbb{N}\rightarrow \mathbb{N}$ and $u:\mathbb{N}\rightarrow \mathbb{C}$ as 
 	 	$$T(n)=\left\{\begin{array}{cl}
 	 		1 & \mbox{ if } n=1\\
 	 		2 & \mbox{ if } n=3\\
 	 		n-2& \mbox{ if $n$ is odd and } n\geq 5\\
 	  	 	n+2 & \mbox{ if $n$ is even }
 	 	\end{array}\right.  ~~~~~\mbox{ and }~~~~u(n)=\left\{\begin{array}{cl}
 	 	0 & \mbox{ if $n$ is odd}\\
 	 	1 & \mbox{ if $n$ is even}
  	\end{array}\right.$$
   then all the conditions of Theorem \ref{411} are satisfied except the condition $h_T=0$ a.e. on $(Supp(u))^c$. Under these choices of $u$ and $T$, we obtain that the ascent of $W_{u,T}$ is infinite, as $\chi_{\{1\}}\in \mathcal{N}(W_{u,T})\setminus \{0\}$ and $\chi_{\{2k\}}\in \mathcal{N}(W_{u,T}^{k+1})\setminus \mathcal{N}(W_{u,T}^k)$ for $k\in\mathbb{N}$.
 	 \end{Exa}

	Now, our next result shows the existence of weighted composition operators $W_{u,T}:L(p,q)\mapsto L(p,q)$, $1<p\leq \infty$, $1\leq q\leq \infty$, having ascent infinity.

	\begin{Thm}\label{38}
	Let $W_{u,T}\in \mathfrak{B}(L(p,q))$, $1<p\leq \infty, 1\leq q \leq \infty$. If there is a sequence  $\left<A_k\right>_{k=0}^{\infty}$ of measurable sets in such a way that measure of 
	\begin{enumerate}[(a)]
		\item $A_k$ is non-zero and finite
		\item $N_k\cap T^{-k}(A_k)$ is not equal to zero
		\item $N_{k+1}\cap T^{-(k+1)}(A_k)$ is equal to zero
	\end{enumerate}
	for every $k\geq 0$, then the ascent of $W_{u,T}$ can not be finite. 
	\end{Thm}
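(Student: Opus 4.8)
The plan is to show directly that the ascent is infinite by exhibiting, for every non-negative integer $k$, a function that lies in $\mathcal{N}(W_{u,T}^{k+1})$ but not in $\mathcal{N}(W_{u,T}^{k})$. Since the inclusion $\mathcal{N}(W_{u,T}^{k})\subseteq \mathcal{N}(W_{u,T}^{k+1})$ holds automatically (if $W_{u,T}^{k}h=0$ then $W_{u,T}^{k+1}h=W_{u,T}(W_{u,T}^{k}h)=0$), producing such a witness forces the strict inclusion $\mathcal{N}(W_{u,T}^{k})\subsetneq \mathcal{N}(W_{u,T}^{k+1})$ for each $k$. Then the defining set in $\alpha(W_{u,T})=\inf\{k:\mathcal{N}(W_{u,T}^{k+1})=\mathcal{N}(W_{u,T}^{k})\}$ is empty, and by convention the ascent equals $\infty$. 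The natural witness is the characteristic function $\chi_{A_k}$: by hypothesis (a) the set $A_k$ has finite measure, so $\chi_{A_k}\in L(p,q)$, and since $W_{u,T}$ is bounded every iterate $W_{u,T}^{j}\chi_{A_k}$ again belongs to $L(p,q)$.

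First I would record the iterated action of the operator, namely
$$W_{u,T}^{j}h(x)=\prod_{i=1}^{j}u(T^{i}(x))\cdot h(T^{j}(x)),$$
exactly as used in the proof of the first proposition of this section. Taking $j=k+1$ and $h=\chi_{A_k}$, the function $W_{u,T}^{k+1}\chi_{A_k}(x)$ can be nonzero only where simultaneously $\prod_{i=1}^{k+1}u(T^{i}(x))\neq 0$ and $\chi_{A_k}(T^{k+1}(x))\neq 0$; the first condition means $x\in N_{k+1}$ and the second means $x\in T^{-(k+1)}(A_k)$. Hence the support of $W_{u,T}^{k+1}\chi_{A_k}$ is contained in $N_{k+1}\cap T^{-(k+1)}(A_k)$, which has measure zero by hypothesis (c). Consequently $W_{u,T}^{k+1}\chi_{A_k}=0$ almost everywhere, so $\|W_{u,T}^{k+1}\chi_{A_k}\|_{pq}=0$ by the norm equivalences recalled in Section 2, and therefore $\chi_{A_k}\in \mathcal{N}(W_{u,T}^{k+1})$.

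For the non-membership, the same formula with $j=k$ gives $W_{u,T}^{k}\chi_{A_k}(x)=\prod_{i=1}^{k}u(T^{i}(x))$ when $x\in N_{k}\cap T^{-k}(A_k)$ and $0$ otherwise; on $N_k$ the product is nonzero by the very definition of $N_k$, so the support of $W_{u,T}^{k}\chi_{A_k}$ is precisely $N_{k}\cap T^{-k}(A_k)$, a set of positive measure by hypothesis (b). Thus $W_{u,T}^{k}\chi_{A_k}$ is not the zero element of $L(p,q)$, whence $\chi_{A_k}\notin\mathcal{N}(W_{u,T}^{k})$, and combining with the previous paragraph gives the desired strict inclusion for every $k\geq 0$. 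I do not anticipate a genuine obstacle here beyond careful bookkeeping: the two support computations are the heart of the argument, and the only subtlety is translating ``support lies in a null set'' into membership in the kernel and ``support has positive measure'' into non-membership, both of which go through the equivalence $\|h\|_{pq}=0\iff h=0$ a.e. Measurability of $N_k$ and of $T^{-k}(A_k)$ is not an issue, being guaranteed by the non-singular measurability of $T$ and of each $u\circ T^{i}$.
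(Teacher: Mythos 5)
Your proposal is correct and follows essentially the same route as the paper's proof: the same witness $\chi_{A_k}$, the same computation showing its image under $W_{u,T}^{k+1}$ is supported in the null set $N_{k+1}\cap T^{-(k+1)}(A_k)$ (the paper phrases this via the distribution function, you via supports, which is equivalent), and the same identification of the support of $W_{u,T}^{k}\chi_{A_k}$ with $N_k\cap T^{-k}(A_k)$ to rule out membership in $\mathcal{N}(W_{u,T}^{k})$. No gaps; the argument matches the paper step for step.
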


\begin{proof}
	Suppose that there exists a sequence $\left<A_k\right>_{k=0}^{\infty}$ of measurable sets satisfying the properties $(a), (b)$ and $(c)$. In order to show that the ascent of $W_{u,T}$ is infinite, we aim that the  characteristic function $\chi_{A_k}$ lies in  $\mathcal{N}(W_{u,T}^{k+1})\setminus \mathcal{N}(W_{u,T}^{k})$ for all $k\geq 0$. For $s\geq 0$
	\begin{align*}
		\mu_{W_{u,T}^{k+1}\chi_{A_k}}(s)&=\mu\left(\left\{x\in X~|~\left|\prod_{i=1}^{k+1}u\circ T^{i}(x)\cdot\chi_{A_k}\circ T^{k+1}(x)\right|>s\right\}\right)\\
		&\leq \mu(N_{k+1}\cap T^{-(k+1)}(A_k))=0.
	\end{align*}
	It follows that non-increasing rearrangement of $W_{u,T}^{k+1}\chi_{A_k}$ is equal to zero, and as a conclusion, $\left\|W_{u,T}^{k+1}\chi_{A_k}\right\|_{pq}=0$. Thus, $\chi_{A_k}\in \mathcal{N}(W_{u,T}^{k+1})$. Finally, the justification of $\chi_{A_k}\not\in \mathcal{N}(W_{u,T}^{k})$ is trival. In fact, if we consider $\chi_{A_k}\in \mathcal{N}(W_{u,T}^{k})$ then it contradicts the property $(b)$,  as $N_k\cap T^{-k}(A_k) = \{x\in X : |W_{u,T}^k\chi_{A_k}(x)|>0\}$. 
\end{proof}

Our next result, the proof of which follows the lines of proof of Theorem 2.10 of \cite{bg} and hence we just state the result. 

\begin{Thm} \label{414}
	Assume that the image of measurable set under the non-singular measurable transformation $T:X\mapsto X$ is also measurable. Then $\alpha(W_{u,T})=\infty$ implies the existence of measurable sets sequence $\left<A_k\right>_{k=0}^{\infty}$ of $X$ in such a way that for all $k\geq 0$ the following properties hold
	\begin{enumerate}[(a)]
		\item $0<\mu(A_k)<\infty$
		\item $A_k\subseteq T^k(B\cap N_k)$ for some $B\in\mathcal{A}$
		\item $A_k\not\in \left\{T^{k+1}(D\cap N_{k+1})~|~D\in\mathcal{A} \text{ and } \mu(D\cap N_{k+1})>0\right\}$
	\end{enumerate}
\end{Thm}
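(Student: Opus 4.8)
The plan is to establish the contrapositive companion to Theorem~\ref{38}: given $\alpha(W_{u,T})=\infty$, I will manufacture the sets $A_k$ directly from witnesses to the strictness of the ascent chain. Since $\alpha(W_{u,T})=\infty$, for every $k\geq 0$ the inclusion $\mathcal{N}(W_{u,T}^{k})\subseteq \mathcal{N}(W_{u,T}^{k+1})$ is proper, so I first fix $f_k\in \mathcal{N}(W_{u,T}^{k+1})\setminus \mathcal{N}(W_{u,T}^{k})$ and write $E_k=\{x\in X: f_k(x)\neq 0\}$. The engine of the whole argument is the same distribution-function computation used in the proof of Theorem~\ref{38}: since $W_{u,T}^{m}f(x)=\prod_{i=1}^{m}u(T^{i}(x))\cdot f(T^{m}(x))$ is nonzero exactly on $N_m\cap T^{-m}(\{x:f(x)\neq 0\})$, membership $f\in\mathcal{N}(W_{u,T}^{m})$ is equivalent to $\mu\big(N_m\cap T^{-m}(\{x:f(x)\neq 0\})\big)=0$. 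Applying this twice to $f_k$ gives the two facts I will use repeatedly: $\mu\big(N_{k+1}\cap T^{-(k+1)}(E_k)\big)=0$, while $\mu\big(N_k\cap T^{-k}(E_k)\big)>0$.

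Next I build $A_k$. Set $S_k=N_k\cap T^{-k}(E_k)$, so $\mu(S_k)>0$ and $T^{k}(S_k)\subseteq E_k$; the image $T^{k}(S_k)$ is measurable by the standing hypothesis that $T$ sends measurable sets to measurable sets. The key point is the non-collapse claim $\mu\big(T^{k}(S_k)\big)>0$: since $S_k\subseteq T^{-k}\big(T^{k}(S_k)\big)$, the absolute-continuity chain \eqref{11} and the Radon--Nikodym identity \eqref{12} give $\mu(S_k)\leq \mu_k\big(T^{k}(S_k)\big)=\int_{T^{k}(S_k)}h_{T^{k}}\,d\mu$, so $\mu\big(T^{k}(S_k)\big)=0$ would force $\mu(S_k)=0$, a contradiction. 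Having positive measure, $\sigma$-finiteness lets me choose a measurable $A_k\subseteq T^{k}(S_k)$ with $0<\mu(A_k)<\infty$, which is property~(a); taking $B=T^{-k}(E_k)$ gives $A_k\subseteq T^{k}(S_k)=T^{k}(B\cap N_k)$, which is property~(b).

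For property~(c), I note first that $A_k\subseteq T^{k}(S_k)\subseteq E_k$. Suppose, for contradiction, that $A_k=T^{k+1}(D\cap N_{k+1})$ for some $D\in\mathcal{A}$ with $\mu(D\cap N_{k+1})>0$. Writing $D'=D\cap N_{k+1}$, from $T^{k+1}(D')=A_k\subseteq E_k$ I get $D'\subseteq T^{-(k+1)}(E_k)$, whence $D'\subseteq N_{k+1}\cap T^{-(k+1)}(E_k)$, a null set by the first paragraph. This forces $\mu(D')=0$, contradicting $\mu(D\cap N_{k+1})>0$, so~(c) holds, and all three properties are secured for every $k\geq 0$.

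The step I expect to be the main obstacle is the non-collapse claim $\mu\big(T^{k}(S_k)\big)>0$ in the second paragraph: without it the candidate $A_k$ could be null and property~(a) would fail. This is precisely where the Radon--Nikodym representation \eqref{12} together with \eqref{11} is indispensable, and it is also the only place the hypothesis that $T$ preserves measurability of images is genuinely needed, namely to regard $T^{k}(S_k)$ (and hence $A_k$) as a legitimate measurable set.
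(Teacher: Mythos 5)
Your proof is correct and is essentially the paper's own argument written out in full: the paper's one-line proof defers to Theorem 2.10 of \cite{bg}, replacing $X_k'$ there by $N_k\cap B$ with $B=\{x\in X: h_k\circ T^k(x)\neq 0\}$, and this set is exactly your $S_k=N_k\cap T^{-k}(E_k)$ built from a witness $f_k\in\mathcal{N}(W_{u,T}^{k+1})\setminus\mathcal{N}(W_{u,T}^{k})$. Your self-contained version (null-space characterization via $N_m\cap T^{-m}(E)$, the non-collapse of $\mu(T^k(S_k))$ via the Radon--Nikodym identity, and the contradiction for property (c)) is a faithful expansion of that adaptation, not a different route.
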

\begin{proof} 
	The proof follows from the proof of [2, Theorem 2.10] on replacing $X_k'$ by $N_k\cap B$, where $B=\{x\in X: h_k\circ T^k(x)\neq 0\}$. 
\end{proof}

	\subsection{Descent}
This subsection deals with results on the descent of the weighted composition operators $W_{u,T}\in\mathfrak{B}(L(p,q))$, $1<p\leq \infty$, $1\leq q\leq \infty$. To develop the theory on descent, we begin by introducing necessary notations, definitions and lemmas. Following this groundwork, we delve into the main results of this section.

\begin{Def}
	A measurable transformation $T:X\rightarrow X$ is known as injective almost everywhere on $X$ if $\mathcal{E}$ is a measurable set with measure equal to zero where $\mathcal{E}:=\{x\in X~|~\exists ~y\in X, y\neq x \text{ and } T(x)=T(y)\}$.
\end{Def}

Let $k\in\mathbb{N}\cup \{0\}$ be a fixed number. Define $\mathcal{E}_k :=\{x\in T^k(N_{k+1})~|~\exists~y\in T^k(N_{k+1}), y\neq x \text{ and } T(y)=T(x)\}$. The sets $\mathcal{E}_k$ satisfy the relationship $\cdots \subseteq \mathcal{E}_{k+1}\subseteq \mathcal{E}_{k}\subseteq \mathcal{E}_{k-1}\subseteq\cdots\subseteq  \mathcal{E}_{1}\subseteq \mathcal{E}_{0}\subseteq \mathcal{E}$. This can be verified using the following lemma.

\begin{Lem}\label{n415}
	For every $k\in \mathbb{N}\cup \{0\}$, $T^{k+1}(N_{k+2})\subseteq T^{k}(N_{k+1})$.
\end{Lem}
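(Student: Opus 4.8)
The plan is to prove the inclusion by a direct element-chasing argument, exploiting the fact that post-composing a witness with $T$ shifts the defining index window of the sets $N_k$ by one. Recall that $N_{k+2}=\{x\in X\mid u(T^i(x))\neq 0 \text{ for all } 1\le i\le k+2\}$ and likewise $N_{k+1}=\{x\in X\mid u(T^i(x))\neq 0 \text{ for all } 1\le i\le k+1\}$, so these sets are nested decreasingly.

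First I would take an arbitrary point $y\in T^{k+1}(N_{k+2})$ and fix a witness $x\in N_{k+2}$ with $y=T^{k+1}(x)$. The key idea is to produce a single witness for membership in $T^{k}(N_{k+1})$ by setting $z:=T(x)$. Then $T^{k}(z)=T^{k}(T(x))=T^{k+1}(x)=y$, so it only remains to verify that $z\in N_{k+1}$.

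The heart of the argument is this index-shift verification. For $1\le i\le k+1$ we have $T^i(z)=T^i(T(x))=T^{i+1}(x)$, so the requirement ``$u(T^i(z))\neq 0$ for all $1\le i\le k+1$'' is exactly the requirement ``$u(T^j(x))\neq 0$ for all $2\le j\le k+2$''. Since $x\in N_{k+2}$ guarantees $u(T^j(x))\neq 0$ for all $1\le j\le k+2$, the needed inequalities hold (indeed with the case $j=1$ to spare). Hence $z\in N_{k+1}$ and $y=T^{k}(z)\in T^{k}(N_{k+1})$; as $y$ was arbitrary, the inclusion follows.

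I do not anticipate a genuine obstacle here; the only point demanding care is the bookkeeping of the index ranges under composition with $T$ — making sure the window $\{1,\dots,k+1\}$ for $z$ translates precisely into the window $\{2,\dots,k+2\}$ for $x$, which sits inside the window $\{1,\dots,k+2\}$ guaranteed by $x\in N_{k+2}$. Once this alignment is recorded, the claimed chain $\cdots\subseteq \mathcal{E}_{k+1}\subseteq\mathcal{E}_{k}\subseteq\cdots$ for the exceptional sets is immediate: since $\mathcal{E}_{k+1}$ is built from $T^{k+1}(N_{k+2})$ while $\mathcal{E}_{k}$ is built from $T^{k}(N_{k+1})$, the lemma forces every $x\in\mathcal{E}_{k+1}$ and its partner $y$ to lie in $T^{k}(N_{k+1})$, whence $x\in\mathcal{E}_{k}$.
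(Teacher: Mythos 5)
Your proof is correct and is essentially identical to the paper's own argument: both take a point of $T^{k+1}(N_{k+2})$ with witness in $N_{k+2}$, push the witness forward by one application of $T$, and verify membership in $N_{k+1}$ via the index shift $u(T^i(T(x)))=u(T^{i+1}(x))\neq 0$ for $1\le i\le k+1$. The added remark on the chain $\mathcal{E}_{k+1}\subseteq\mathcal{E}_k$ matches the paper's stated purpose for the lemma and is also correct.
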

\begin{proof}
	Let $x\in T^{k+1}(N_{k+2})$. Then we can select $y_x$ in $N_{k+2}$ satisfying $x=T^{k+1}(y_x)=T^{k}(T(y_x))$. This results that $x\in T^{k}(N_{k+1})$, as $u\circ T^i(T(y_x))=u\circ T^{i+1}(y_x)\neq 0$ for all $1\leq i\leq k+1$ because of $y_x\in N_{k+2}$.
\end{proof}

Corresponding to each $x\in \mathcal{E}_k$, we define $A_x^k:=\{y\in \mathcal{E}_k~|~T(x)=T(y)\}$. In the next lemma, we demonstrate that for distinct $x_1$ and $x_2$ within $\mathcal{E}_k$, the sets $A_{x_1}^k$ and $A_{x_2}^k$ are either disjoint or identical.  

\begin{Lem} \label{n416}
	For distinct $x_1, x_2\in \mathcal{E}_k$, either $A_{x_1}^k\cap A_{x_2}^k=\emptyset$ or $A_{x_1}^k=A_{x_2}^k$.
\end{Lem}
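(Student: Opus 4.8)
The plan is to recognize that the family of sets $\{A_x^k\}_{x\in\mathcal{E}_k}$ is nothing but the collection of equivalence classes of a natural equivalence relation on $\mathcal{E}_k$, so that the asserted dichotomy is the familiar fact that equivalence classes are either disjoint or identical. Concretely, on $\mathcal{E}_k$ define $x\sim y$ if and only if $T(x)=T(y)$; reflexivity, symmetry, and transitivity are immediate since they are inherited from equality of the values $T(\cdot)$. By the very definition $A_x^k=\{y\in\mathcal{E}_k~|~T(x)=T(y)\}$ is precisely the $\sim$-class of $x$ (note $x\in A_x^k$ because $x\in\mathcal{E}_k$ and $T(x)=T(x)$), equivalently $A_x^k=\mathcal{E}_k\cap T^{-1}(\{T(x)\})$, the fiber of $T$ over $T(x)$ met with $\mathcal{E}_k$.

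The argument I would then carry out is the standard mutual-inclusion step. Assume $A_{x_1}^k\cap A_{x_2}^k\neq\emptyset$ and pick $z$ in the intersection. Since $z\in A_{x_1}^k$ gives $T(z)=T(x_1)$ and $z\in A_{x_2}^k$ gives $T(z)=T(x_2)$, we obtain $T(x_1)=T(x_2)$. Now take an arbitrary $y\in A_{x_1}^k$; then $y\in\mathcal{E}_k$ and $T(y)=T(x_1)=T(x_2)$, whence $y\in A_{x_2}^k$, proving $A_{x_1}^k\subseteq A_{x_2}^k$. Interchanging the roles of $x_1$ and $x_2$ gives the reverse inclusion, so $A_{x_1}^k=A_{x_2}^k$. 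Thus whenever the two sets meet they coincide, which is exactly the claimed alternative.

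There is no genuine obstacle here: the statement is purely set-theoretic and does not invoke the measure structure, the boundedness of $W_{u,T}$, or any of the earlier propositions. The only point demanding a line of care is confirming that $A_x^k$ really is a full $\sim$-class inside $\mathcal{E}_k$ (rather than, say, excluding $x$ itself or silently using elements outside $\mathcal{E}_k$), which is why I would spell out that $x\in A_x^k$ and that membership in $A_x^k$ is controlled solely by the condition $T(y)=T(x)$ together with $y\in\mathcal{E}_k$. Once that identification is in place, the dichotomy follows formally, and the lemma is proved in a few lines.
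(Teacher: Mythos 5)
Your proposal is correct and follows essentially the same route as the paper: pick a point $z$ in the intersection, deduce $T(x_1)=T(z)=T(x_2)$, and conclude equality of the sets by mutual inclusion. The equivalence-relation framing is a pleasant way to package it, but the underlying argument is identical to the paper's.
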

\begin{proof}
	If $A_{x_1}^k\cap A_{x_2}^k=\emptyset$ then we are done. Suppose that $z\in A_{x_1}^k\cap A_{x_2}^k$. This implies $T(z)=T(x_1)=T(x_2)$, and as a result, $x_2\in A_{x_1}^k$ and $x_1\in A_{x_2}^k$. Hence, $A_{x_1}^k=A_{x_2}^k$. Indeed, if $y\in A_{x_1}^k$ then $T(y)=T(x_1)$ which further gives $T(y)=T(x_2)$. Therefore, $y\in A_{x_2}^k$. Similarly, we get the other side of containment.
\end{proof}

By the definition of injective almost everywhere on $X$, it is evident that if the measure of $A_x^k$ is non-zero for some $x\in \mathcal{E}_k$ then the mapping $T: T^k(N_{k+1})\rightarrow T^{k+1}(N_{k+1})~(\subseteq T^k(N_{k+1}))$ is not injective almost everywhere on $T^k(N_{k+1})$. Consequently, $T:X\rightarrow X$ is not injective almost everywhere on $X$. These sets $A_x^k$ play an important role in ensuring the infinity descent of $W_{u,T}$. To accomplish this, we first introduce separable measurable sets.

\begin{Def}
	A measurable set $E$ of $X$ is said to be separable measurable set if we can write $E$ as the union of two disjoint positive measurable subsets of $E$, that is, there are  $E_1\subseteq E$ and $E_2\subseteq E$ such that $\mu(E_1)>0$, $\mu(E_2)>0$, $E_1\cap E_2=\emptyset$ and $E=E_1\cup E_2$.
\end{Def}

\begin{Lem}\label{new417}
	Let $(X,\mathcal{A},\mu)$ be a $\sigma-$finite and complete measure space. Suppose that there exists atleast one separable subset $A_x^k$ of $\mathcal{E}_k$ satisfying $\mu(N_{k+1}\cap T^{-k}(B))>0$ for all positive measurable subset $B$ of $A_x^k$ where $k\in\mathbb{N}\cup \{0\}$. Then $\beta(W_{u,T})=\infty$.
\end{Lem}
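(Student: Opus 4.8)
The plan is to show that the nested ranges $\mathcal{R}(W_{u,T}^{m})$ never stabilise. Since $W_{u,T}^{m+1}=W_{u,T}^{m}W_{u,T}$, these ranges are decreasing, and if $\mathcal{R}(W_{u,T}^{m+1})=\mathcal{R}(W_{u,T}^{m})$ for some $m$ then applying $W_{u,T}$ propagates the equality to all larger indices; hence the set of indices at which the ranges agree is upward closed. Consequently it suffices to produce, for every $m\geq k$, a witness $g_m\in\mathcal{R}(W_{u,T}^{m})\setminus\mathcal{R}(W_{u,T}^{m+1})$: being upward closed and containing no index $\geq k$, the agreement set is then forced to be empty, so $\beta(W_{u,T})=\infty$.

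All witnesses exploit one structural fact. Every $y\in A_x^k$ satisfies $T(y)=T(x)=:p$, and since $A_x^k\subseteq\mathcal{E}_k\subseteq T^k(N_{k+1})$, writing $y=T^k(w)$ with $w\in N_{k+1}$ gives $u(p)=u(T^{k+1}(w))\neq 0$. Thus $T^{m+1}$ sends every preimage of $A_x^k$ to the single point $p$, where the weight is nonzero. For the base case $m=k$, split the separable fibre as $A_x^k=E_1\sqcup E_2$ with $\mu(E_1),\mu(E_2)>0$; by $\sigma$-finiteness we may take $\mu(E_i)<\infty$, so $\chi_{E_1},\chi_{E_2}\in L(p,q)$ by the formula $\|\chi_A\|_{pq}=(p')^{1/q}\mu(A)^{1/p}$ (and $=\mu(A)^{1/p}$ when $q=\infty$). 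Put $\psi=c_1\chi_{E_1}+c_2\chi_{E_2}$ with $c_1\neq c_2$ and $g_k=W_{u,T}^k\psi$, which lies in $\mathcal{R}(W_{u,T}^k)$ as $W_{u,T}$ is bounded. Suppose $g_k=W_{u,T}^{k+1}h$. On $N_{k+1}$ the product $\prod_{i=1}^{k}u\circ T^i$ is nonzero, so dividing the identity $\prod_{i=1}^{k}(u\circ T^i)\,(\psi\circ T^k)=\prod_{i=1}^{k+1}(u\circ T^i)\,(h\circ T^{k+1})$ gives $\psi\circ T^k=(u\circ T^{k+1})\,(h\circ T^{k+1})$ a.e.\ on $N_{k+1}$. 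Restricting to $N_{k+1}\cap T^{-k}(E_1)$ and $N_{k+1}\cap T^{-k}(E_2)$, both of positive measure by hypothesis and both carried by $T^{k+1}$ to $p$, the right-hand side equals one and the same value $u(p)\,h(p)$ a.e.\ on each, forcing $c_1=u(p)\,h(p)=c_2$, a contradiction. Hence $g_k\notin\mathcal{R}(W_{u,T}^{k+1})$.

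For $m>k$ the identical division argument closes once one has two disjoint positive-measure sets $Z_1^{(m)},Z_2^{(m)}\subseteq N_{m+1}$ with $T^m(Z_i^{(m)})\subseteq E_i$ (whence $T^{m+1}(Z_i^{(m)})=\{p\}$): taking the same $\psi$ and $g_m=W_{u,T}^m\psi$, the computation on $Z_1^{(m)}$ and $Z_2^{(m)}$ again forces $c_1=c_2$. I would obtain the sets $Z_i^{(m)}$ by pulling the fibre back through $T$, lifting the positive pieces $N_{k+1}\cap T^{-k}(E_i)$ furnished by the hypothesis to depth $m$ with the help of Lemma \ref{n415} and its iterates $T^{m}(N_{m+1})\subseteq\cdots\subseteq T^{k}(N_{k+1})$, the non-singularity of $T$, and the chain rule for the derivatives $h_{T^m}$. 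I expect this propagation to be the crux of the argument: because the images $T^m(N_{m+1})$ shrink with $m$, the delicate point is to keep the lifted sets simultaneously of positive measure and inside $N_{m+1}$, and this is exactly where the full strength of the hypothesis (positivity of $\mu(N_{k+1}\cap T^{-k}(B))$ for \emph{every} positive $B\subseteq A_x^k$) must be used. Granting the sets $Z_i^{(m)}$ for all $m\geq k$, the witnesses $g_m$ exist at every level and $\beta(W_{u,T})=\infty$.
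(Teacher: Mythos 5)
Your level-$k$ construction is correct and is, in substance, exactly the paper's proof: the paper splits the separable fibre $A_x^k$ into disjoint positive-measure pieces, shrinks them to finite-measure sets $B_1,B_2$, takes $f=\chi_{B_1}-\chi_{B_2}$ (your $\psi$ with $c_1=1$, $c_2=-1$), and, assuming $W_{u,T}^kf=W_{u,T}^{k+1}h$, picks points $z_i\in N_{k+1}\cap T^{-k}(B_i)$ (positive measure by hypothesis) to get $1=u\circ T^{k+1}(z_1)\,h\circ T^{k+1}(z_1)$ and $-1=u\circ T^{k+1}(z_2)\,h\circ T^{k+1}(z_2)$, which is absurd because $T^k(z_1),T^k(z_2)\in A_x^k$ forces $T^{k+1}(z_1)=T^{k+1}(z_2)=T(x)$. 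Your divide-by-the-weight formulation is the same computation.

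The genuine gap is everything after that, and it comes from a misreading of the quantifier in the statement. The clause ``where $k\in\mathbb{N}\cup\{0\}$'' is meant as: \emph{for every} $k\in\mathbb{N}\cup\{0\}$ there exists a separable fibre $A_x^k\subseteq\mathcal{E}_k$ with the stated positivity property. This is how the paper's proof reads it (it fixes an arbitrary $k$ and produces a witness in $\mathcal{R}(W_{u,T}^k)\setminus\mathcal{R}(W_{u,T}^{k+1})$ at that same level, for each $k$), and how Theorem \ref{new418} restates the hypothesis (``for every $k\in\mathbb{N}\cup\{0\}$''). Under that reading no propagation is needed at all: you simply run your base case at each level $m$ with the fibre $A_x^m$ the hypothesis hands you, and the proof is finished. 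Under your reading (hypothesis at a single $k$), the proposal is incomplete precisely at the step you flag as the crux: the sets $Z_i^{(m)}\subseteq N_{m+1}$ with $T^m(Z_i^{(m)})\subseteq E_i$ are only announced (``I would obtain\ldots'', ``Granting the sets\ldots''), and they cannot be manufactured from the level-$k$ hypothesis alone --- nothing in it guarantees that $E_i$ even meets $T^m(N_{m+1})$ for $m>k$, let alone that the intersection pulls back to a positive-measure subset of $N_{m+1}$; Lemma \ref{n415} gives the inclusion $T^m(N_{m+1})\subseteq T^k(N_{k+1})$, which points in the unhelpful direction. So, as written, your argument establishes only $\beta(W_{u,T})>k$, not $\beta(W_{u,T})=\infty$. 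The fix is not a lifting lemma but the correct reading of the hypothesis: invoke it at every level and apply your base case there.
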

\begin{proof}
	Let $k\in\mathbb{N}\cup\{0\}$ be a fixed number. Since $A_x^k$ is separable subset of $\mathcal{E}_k$, we can find two disjoint positive measurable subsets $A_1$ and $A_2$ of $A_x^k$ in such a way that $A_x^k=A_1\cup A_2$. Due to $\sigma-$finite, we can take $B_1~(\subseteq A_1)$ and $B_2~(\subseteq A_2)$ such that $0<\mu(B_1), \mu(B_2)<\infty$, $B_1\cap B_2=\emptyset$ and $T^{-k}(B_1)\cap T^{-k}(B_2)=\emptyset$. Define $f:=\chi_{B_1}-\chi_{B_2}$. Obviously, $f\in L(p,q)$, and as a result, $g:=W_{u,T}^k f\in \mathcal{R}(W_{u,T}^k)$. Now, we assert that $g\not\in \mathcal{R}(W_{u.T}^{k+1})$. On the contrary, we assume that $g=W_{u,T}^{k+1}h$ for some $h\in L(p,q)$, that is, $\mu\big(\{x\in X~|~g(x)\neq W_{u,T}^{k+1}h(x) \}\big)=0$. Since the measure of $N_{k+1}\cap T^{-k}(B_1)$ and  $N_{k+1}\cap T^{-k}(B_2)$ are positive, we can pick $z_1\in N_{k+1}\cap T^{-k}(B_1)$ and $z_2\in N_{k+1}\cap T^{-k}(B_2)$ satisfying $g(z_1)= W_{u,T}^{k+1}h(z_1)$ and $g(z_2)=W_{u,T}^{k+1}h(z_2)$ respectively. This provides $1=u\circ T^{k+1}(z_1)\cdot h\circ T^{k+1}(z_1)$ and $-1=u\circ T^{k+1}(z_2)\cdot h\circ T^{k+1}(z_2)$ which further leads to a contradiction because $T^{k}(z_1), T^{k}(z_2)\in A_x^k$. 
\end{proof}

\begin{Thm}\label{new418}
	Suppose that $(X,\mathcal{A},\mu)$ is a $\sigma-$finite complete measure space and $T:X\rightarrow X$ is a non-singular measurable transformation satisfying the property $\mu\circ T(E)\leq \mu(E)$ for every $E\in\mathcal{A}$. If there is atleast one separable subset $A_x^k$ of $\mathcal{E}_k$ for every $k\in\mathbb{N}\cup\{0\}$ then the descent of $W_{u,T}$ is infinity.
\end{Thm}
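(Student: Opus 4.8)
The plan is to reduce the statement to Lemma~\ref{new417}. The two results have identical conclusions and differ only in their hypotheses: Lemma~\ref{new417} requires that, for each $k$, there be a separable $A_x^k\subseteq\mathcal{E}_k$ for which $\mu\big(N_{k+1}\cap T^{-k}(B)\big)>0$ holds for \emph{every} positive measurable $B\subseteq A_x^k$, whereas here we are only given the bare existence of a separable $A_x^k$ for each $k$, supplemented by the global contraction property $\mu\circ T(E)\le\mu(E)$. Thus the entire content of the theorem is to check that the contraction property promotes the mere existence of $A_x^k$ into the quantitative hypothesis of Lemma~\ref{new417}; once this is verified, Lemma~\ref{new417} yields $\beta(W_{u,T})=\infty$ at once.

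First I would iterate the contraction hypothesis. Since $\mu\circ T(E)\le\mu(E)$ for every $E\in\mathcal{A}$ (the inequality presupposing, as in Theorem~\ref{414}, that images of measurable sets are measurable), a straightforward induction on $j$---the same one already used in the proof of Theorem~\ref{410}---gives
$$\mu\big(T^{j}(E)\big)\le\mu(E)\qquad\text{for all } j\ge 0 \text{ and all } E\in\mathcal{A}.$$

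Next, fix $k\in\mathbb{N}\cup\{0\}$, choose a separable $A_x^k\subseteq\mathcal{E}_k$, and let $B$ be an arbitrary positive measurable subset of $A_x^k$. The crucial point is the set identity
$$T^{k}\big(N_{k+1}\cap T^{-k}(B)\big)=B,$$
which holds because $B\subseteq A_x^k\subseteq\mathcal{E}_k\subseteq T^{k}(N_{k+1})$: the inclusion $\subseteq$ is immediate from the definition of $T^{-k}(B)$, while for $\supseteq$ each $y\in B$ admits, by $B\subseteq T^k(N_{k+1})$, a point $z\in N_{k+1}$ with $T^k(z)=y$, so that $z\in N_{k+1}\cap T^{-k}(B)$ maps onto $y$. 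Applying the iterated inequality to $E=N_{k+1}\cap T^{-k}(B)$ then gives
$$\mu(B)=\mu\big(T^{k}(N_{k+1}\cap T^{-k}(B))\big)\le\mu\big(N_{k+1}\cap T^{-k}(B)\big),$$
and since $\mu(B)>0$ we conclude $\mu\big(N_{k+1}\cap T^{-k}(B)\big)>0$. As $B$ and $k$ were arbitrary, the chosen separable sets $A_x^k$ now satisfy precisely the hypothesis of Lemma~\ref{new417}, giving $\beta(W_{u,T})=\infty$.

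The only delicate points I anticipate are the set identity $T^{k}(N_{k+1}\cap T^{-k}(B))=B$ and the attendant measurability bookkeeping needed to legitimately apply $\mu(T^{k}(\cdot))\le\mu(\cdot)$ to the intermediate images $T^{j}(E)$; both are handled once image-measurability (already standing in this circle of results) is granted. Beyond this, the argument is a direct appeal to Lemma~\ref{new417}, so no genuinely new estimate on Lorentz-space norms is required.
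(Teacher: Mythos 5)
Your proposal is correct and takes essentially the same route as the paper's own proof: both reduce the theorem to Lemma \ref{new417} by establishing the set identity $T^k\big(N_{k+1}\cap T^{-k}(B)\big)=B$ (valid because $B\subseteq A_x^k\subseteq\mathcal{E}_k\subseteq T^k(N_{k+1})$) and then invoking the contraction property $\mu\circ T(E)\leq\mu(E)$ to deduce $\mu\big(N_{k+1}\cap T^{-k}(B)\big)>0$ for every positive measurable $B\subseteq A_x^k$. The only cosmetic difference is that you argue directly via the iterated inequality $\mu(T^j(E))\leq\mu(E)$, whereas the paper phrases the same step as a proof by contradiction.
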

\begin{proof}
	In order to proof this we only need to show that  $\mu(N_{k+1}\cap T^{-k}(B))>0$ for all positive measurable subset $B$ of $A_x^k ~(\subseteq T^k(N_{k+1}))$ where $k\in\mathbb{N}\cup \{0\}$. To attain the target we claim that $T^k(N_{k+1}\cap T^{-k}(B))=B$. If $x\in T^k(N_{k+1}\cap T^{-k}(B))$ then $x=T^k(y)$ for some $y\in N_{k+1}\cap T^{-k}(B)$. This implies $x=T^k(y)\in T^k(N_{k+1})\cap B=B$. Conversely, if $x\in B$ then $x=T^k(y)$ for some $y\in N_{k+1}$. This provides $y\in N_{k+1}\cap T^{-k}(B)$, and as a consequence, we have $x=T^k(y)\in T^k(N_{k+1}\cap T^{-k}(B))$. Hence, $T^k(N_{k+1}\cap T^{-k}(B))=B$. To complete the proof, if possible assume that $\mu(N_{k+1}\cap T^{-k}(B))=0$ whenever $\mu(B)>0$, then using the property of $T$, we achieve $\mu(B)=\mu(T^k(N_{k+1}\cap T^{-k}(B)))=0$, a contradiction. Hence, we are done with the claim, and the result follows from Lemma \ref{new417}.
\end{proof}

In the upcoming theorem, we investigate another sufficient condition that helps in attaining $\beta(W_{u,T})=\infty$. The Lemma \ref{n416} divides $\mathcal{E}_k$ as union of disjoint $A_x^k$ where $x$ belongs to $\mathcal{E}_k$. Now, from each disjoint $A_{x}^k$ we pick $x\in A_{x}^k$, and we denote the collection of all such $x$ by $\mathcal{E}_1^k$, and $\mathcal{E}_k\setminus \mathcal{E}_1^k$ by $\mathcal{E}_2^k$.

\begin{Thm}\label{new419}
	Let $T$ be a non-singular measurable transformation on $\sigma-$finite and complete measure space $(X,\mathcal{A},\mu)$ such that $\mu\circ T(A)\leq \mu(A)$ for each $A\in\mathcal{A}$. Assume that there are measurable subsets $A_1^k~ (\subseteq \mathcal{E}_1^k)$ and $A_2^k ~(\subseteq \mathcal{E}_2^k)$ with non-zero finite measure satisfying $T_k(A_1^k)=T_k(A_2^k)$ where $T_k=T : T^k(N_{k+1})\rightarrow T^{k+1}(N_{k+1})$ for each $k\in\mathbb{N}\cup\{0\}$. Then the operator $W_{u,T}$ on $L(p,q)$ has infinity descent.
\end{Thm}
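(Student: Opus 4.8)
The plan is to show, for every $k\in\mathbb{N}\cup\{0\}$, that $\mathcal{R}(W_{u,T}^{k+1})$ is a \emph{proper} subset of $\mathcal{R}(W_{u,T}^k)$; since the inclusion $\mathcal{R}(W_{u,T}^{k+1})\subseteq\mathcal{R}(W_{u,T}^k)$ always holds, this forces $\beta(W_{u,T})=\infty$. Mirroring the construction in Lemma \ref{new417}, I would fix $k$ and use the hypothesised sets $A_1^k\subseteq\mathcal{E}_1^k$ and $A_2^k\subseteq\mathcal{E}_2^k$ to define the test function $f:=\chi_{A_1^k}-\chi_{A_2^k}$. Because $A_1^k$ and $A_2^k$ have finite measure, $f\in L(p,q)$, so $g:=W_{u,T}^k f$ lies in $\mathcal{R}(W_{u,T}^k)$, and the whole argument reduces to the claim $g\notin\mathcal{R}(W_{u,T}^{k+1})$.

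For that claim I would argue by contradiction: suppose $g=W_{u,T}^{k+1}h$ for some $h\in L(p,q)$. Writing $W_{u,T}^k f=\prod_{i=1}^{k}(u\circ T^i)\cdot(f\circ T^k)$ and likewise for $W_{u,T}^{k+1}h$, I restrict to $N_{k+1}$, where every factor $u\circ T^i$ with $1\le i\le k+1$ is nonzero, and cancel the common product $\prod_{i=1}^{k}(u\circ T^i)$ to obtain $f\circ T^k=(uh)\circ T^{k+1}$ almost everywhere on $N_{k+1}$. Put $\psi:=uh$ and $S_j:=N_{k+1}\cap T^{-k}(A_j^k)$ for $j=1,2$. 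Since $A_1^k$ and $A_2^k$ are disjoint (they lie in the disjoint sets $\mathcal{E}_1^k$ and $\mathcal{E}_2^k$), the function $f\circ T^k$ equals $+1$ on $S_1$ and $-1$ on $S_2$, so $\psi\circ T^{k+1}=1$ a.e.\ on $S_1$ and $\psi\circ T^{k+1}=-1$ a.e.\ on $S_2$.

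It remains to turn these two almost-everywhere identities into a genuine contradiction, and this is the delicate part. First, the set identity $T^k(N_{k+1}\cap T^{-k}(B))=B$ for $B\subseteq T^k(N_{k+1})$, established inside the proof of Theorem \ref{new418}, together with $\mu\circ T(E)\le\mu(E)$ iterated $k$ times, gives $\mu(A_j^k)=\mu(T^k(S_j))\le\mu(S_j)$, so both $S_1$ and $S_2$ have positive measure. Next, writing $C:=T_k(A_1^k)=T_k(A_2^k)$, I would invoke non-singularity of $T$ to guarantee $\mu(C)>0$: if $\mu(C)=0$ then $\mu(T^{-1}(C))=0$, and since $A_1^k\subseteq T^{-1}(T(A_1^k))=T^{-1}(C)$ this would force $\mu(A_1^k)=0$, contrary to hypothesis. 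Finally I push the pointwise information forward through $T^{k+1}$: for $C_1:=\{c\in C:\psi(c)\ne 1\}$ one has $S_1\cap T^{-(k+1)}(C_1)=\{x\in S_1:\psi(T^{k+1}(x))\ne 1\}$, which is null, while $C_1\subseteq T^{k+1}(S_1\cap T^{-(k+1)}(C_1))$; hence $\mu(C_1)\le\mu(S_1\cap T^{-(k+1)}(C_1))=0$, i.e.\ $\psi=1$ a.e.\ on $C$. The symmetric argument from $S_2$ yields $\psi=-1$ a.e.\ on $C$. As $\{\psi=1\}$ and $\{\psi=-1\}$ are disjoint and each is co-null in $C$, this gives $2\mu(C)\le\mu(C)$, contradicting $\mu(C)>0$. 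I expect the main obstacle to be precisely this transfer step: because $T^{k+1}$ is not invertible, the positivity of the relevant preimage sets $S_j$ and of the image $C$ must be extracted from the hypotheses $\mu\circ T\le\mu$ and non-singularity, and the two ``$\pm1$'' conclusions must be honestly located on one common positive-measure piece $C$ rather than merely on isolated points.
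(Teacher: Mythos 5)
Your proof is correct, and it shares the paper's skeleton --- the same test function $f=\chi_{A_1^k}-\chi_{A_2^k}$, the same contradiction hypothesis $g=W_{u,T}^{k+1}h$, and the same exploitation of the identification $T_k(A_1^k)=T_k(A_2^k)$ --- but the endgame is organized genuinely differently. The paper stays on the \emph{preimage} side: it splits each $N_{k+1}\cap T^{-k}(A_i^k)$ into a disagreement part $A_{i1}^k$ (where $g\neq W_{u,T}^{k+1}h$) and an agreement part $A_{i2}^k$, proves the inclusion $T^{k+1}(A_{12}^k)\subseteq T^{k+1}(A_{21}^k)$ by matching each $x\in A_{12}^k$ with a point $z\in N_{k+1}\cap T^{-k}(A_2^k)$ having the same image (the $\pm 1$ clash forces $z$ into the disagreement part $A_{21}^k$), and then uses $\mu\circ T\leq\mu$ followed by non-singularity to pull nullity back and conclude $\mu(A_{12}^k)=0$, contradicting the positivity of $N_{k+1}\cap T^{-k}(A_1^k)$. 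You instead cancel the weight product on $N_{k+1}$ to get the cleaner functional identity $f\circ T^k=\psi\circ T^{k+1}$ with $\psi:=uh$, and land the contradiction on the \emph{image} side: pushing nullity forward with $\mu\circ T\leq\mu$ gives $\psi=1$ a.e.\ on $C:=T_k(A_1^k)$ and, symmetrically, $\psi=-1$ a.e.\ on $C=T_k(A_2^k)$, while non-singularity gives $\mu(C)>0$. Your arrangement is more symmetric, makes the division of labour between the two hypotheses transparent (non-singularity yields positivity of $C$; $\mu\circ T\leq\mu$ yields the nullity transfer), and in fact renders the positivity of the sets $N_{k+1}\cap T^{-k}(A_j^k)$ --- which both you and the paper verify via the argument of Theorem \ref{new418} --- unnecessary: your transfer step gives $\mu(C_1)\leq\mu(S_1)$ regardless. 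The paper's arrangement avoids introducing $\psi$ and keeps every measure estimate on subsets of $X$ rather than on subsets of the image $T^{k+1}(X)$.

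Two small points should be added for completeness. First, your closing line ``$2\mu(C)\leq\mu(C)$ contradicts $\mu(C)>0$'' is only a contradiction when $\mu(C)<\infty$; this is immediate from $\mu(C)=\mu\big(T(A_1^k)\big)\leq\mu(A_1^k)<\infty$, but it must be said. Second, your forward-image steps (like the paper's own) presuppose that $T$ carries measurable sets to measurable sets, which is implicit in the hypothesis $\mu\circ T(A)\leq\mu(A)$ for all $A\in\mathcal{A}$; it is worth making that convention explicit so that $C$, $C_1$ and $T^{k+1}\big(S_1\cap T^{-(k+1)}(C_1)\big)$ are legitimately measurable.
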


	\begin{proof}
		Let $k\in\mathbb{N}\cup\{0\}$ be a fixed number such that the conditions given in the statement of the theorem hold, that is, there are measurable subsets $A_1^k~ (\subseteq \mathcal{E}_1^k)$ and $A_2^k ~(\subseteq \mathcal{E}_2^k)$ in such a way $0<\mu(A_1^k), \mu(A_2^k)<\infty$ and  $T_k(A_1^k)=T_k(A_2^k)$ where $T_k=T : T^k(N_{k+1})\rightarrow T^{k+1}(N_{k+1})$. Define $f:=\chi_{A_1^k}-\chi_{A_2^k}$ and $g:=W_{u,T}^k f$. Both are non-zero and $g\in \mathcal{R}(W_{u,T}^k)$. Now, we show that $g$ does not belong to $\mathcal{R}(W_{u,T}^{k+1})$. On the contrary, let us assume that $g=W_{u,T}^{k+1}h$ (a.e. on $X$) for some $h\in L(p,q)$. Let us write 
		$$N_{k+1}\cap T^{-k}(A_1^k) = A_{11}^k\cup A_{12}^k \text{~~and ~~} N_{k+1}\cap T^{-k}(A_2^k) = A_{21}^k\cup A_{22}^k$$
		where $A_{i1}^k=\{x\in N_{k+1}\cap T^{-k}(A_i^k)~|~g(x)\neq W_{u,T}^{k+1}h(x)\}$ and $A_{i2}^k=\{x\in N_{k+1}\cap T^{-k}(A_i^k)~|~g(x) = W_{u,T}^{k+1}h(x)\}=\{x\in N_{k+1}\cap T^{-k}(A_i^k)~|~ (-1)^{i-1} = u\circ T^{k+1}(x) h\circ T^{k+1}(x)\}$ for $i=1,2$. It can also be verified, using the idea of the proof of Theorem \ref{new418}, that the measure of $N_{k+1}\cap T^{-k}(A_1^k)$ and $N_{k+1}\cap T^{-k}(A_2^k)$ are positive. Now, our aim is to show that $T^{k+1}(A_{12}^k)$ is a subset of $T^{k+1}(A_{21}^k)$. Let $y=T^{k+1}(x)$ for some $x\in A_{12}^k$. Since $A_{12}^k\subseteq N_{k+1}\cap T^{-k}(A_1^k)$ and $T_k(A_1^k)=T_k(A_2^k)$, 
		$$y = T^{k+1}(x) = T(T^{k}(x)) = T_k(T^k(x)) = T_k(z_0) = T_k(T^k(z)) = T^{k+1}(z)$$
		where $z_0\in A_2^k ~\left(\subseteq T^k(N_{k+1})\right)$ and $z\in N_{k+1}\cap T^{-k}(A_2^k)$. More precisely, this $z$ is a member of $A_{21}^k$, as if $z\in A_{22}^k$ then 
		$$-1 = u\circ T^{k+1}(z) h\circ T^{k+1}(z) = u\circ T^{k+1}(x) h\circ T^{k+1}(x) = 1$$
		which is a contradiction. This gives our desired claim. In the end, we get an contradiction to the assumption that $g=W_{u,T}^{k+1}h$ (a.e. on $X$) for some $h\in L(p,q)$ by using the properties of $T$, that is, the properties provide  
		$$\mu(T^{k+1}(A_{12}^k))\leq \mu(T^{k+1}(A_{21}^k))\leq \mu(A_{21}^k)\leq \mu\bigg(\{x\in X~|~g(x)\neq W_{u,T}^{k+1}h(x)\}\bigg)=0,$$
		and therefore, $\mu(A_{12}^k)\leq \mu\bigg(T^{-(k+1)}(T^{k+1}(A_{12}^k))\bigg)=0$. As a result, $g$ does not belong to  $\mathcal{R}(W_{u,T}^{k+1})$. Hence, the descent of $W_{u,T}$ is infinity.
		\end{proof}

Now we try to investigate the necessary conditions for the descent of $W_{u,T}\in \mathfrak{B}(L(p,q))$ to be infinity. In this sequel, we attain the following.

\begin{Thm}\label{42}
	Let $(X,\mathcal{A},\mu)$ be a $\sigma-$finite complete measure space. Suppose that $W_{u,T}\in \mathfrak{B}(L(p,q))$, $1<p\leq \infty$, $1\leq q\leq \infty$, $u$ is bounded away from zero and $\mu(T(A))\leq \mu(A)$ for all $A\in\mathcal{A}$. Then, a necessary condition for $\beta(W_{u,T})=\infty$ is that the map $T_k = T|_{T^k(X)} : T^k(X)\rightarrow T^{k+1}(X)$ is not injective almost everywhere on $T^k(X)$ for every  $k\in\mathbb{N}\cup \{0\}$.
\end{Thm}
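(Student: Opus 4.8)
The plan is to argue by contraposition: I would assume that there exists some $k_0\in\mathbb{N}\cup\{0\}$ for which $T_{k_0}=T|_{T^{k_0}(X)}:T^{k_0}(X)\to T^{k_0+1}(X)$ is injective almost everywhere, and then show that $\beta(W_{u,T})\leq k_0$, so that the descent is finite, contradicting $\beta(W_{u,T})=\infty$. Since the descent is the least $k$ with $\mathcal{R}(W_{u,T}^{k+1})=\mathcal{R}(W_{u,T}^{k})$, it suffices to prove $\mathcal{R}(W_{u,T}^{k_0})\subseteq \mathcal{R}(W_{u,T}^{k_0+1})$ (the reverse inclusion being automatic). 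Writing $W_{u,T}^{m}f=\prod_{i=1}^{m}(u\circ T^i)\cdot(f\circ T^{m})$ and using that $u$, hence each $u\circ T^i$, is bounded away from zero almost everywhere, the task reduces to the following: given $g=W_{u,T}^{k_0}f$ with $f\in L(p,q)$, produce $\tilde f\in L(p,q)$ satisfying $f\circ T^{k_0}=(u\circ T^{k_0+1})\,(\tilde f\circ T^{k_0+1})$ almost everywhere, since this identity is exactly what forces $W_{u,T}^{k_0+1}\tilde f=g$ after cancelling the nonvanishing factor $\prod_{i=1}^{k_0}(u\circ T^i)$.

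To build $\tilde f$ I would use the almost-everywhere injectivity to invert $T_{k_0}$. Let $S=T^{k_0}(X)$ and $S'=T^{k_0+1}(X)=T(S)$; off a null subset of $S$ the map $T_{k_0}$ admits a well-defined inverse $\rho:S'\to S$. I then set
\begin{align*}
\tilde f(w)=\begin{cases}\dfrac{f(\rho(w))}{u(w)}, & w\in S',\\[1mm] 0, & w\notin S'.\end{cases}
\end{align*}
For a.e.\ $x$ one has $T^{k_0}(x)\in S$ and $\rho(T^{k_0+1}(x))=\rho(T(T^{k_0}(x)))=T^{k_0}(x)$ (here the nonsingularity of $T^{k_0}$ guarantees that the null set where $\rho\circ T_{k_0}\neq \mathrm{id}$ pulls back to a null set), whence
\begin{align*}
W_{u,T}^{k_0+1}\tilde f(x)=\prod_{i=1}^{k_0+1}u(T^i(x))\cdot\frac{f(T^{k_0}(x))}{u(T^{k_0+1}(x))}=\prod_{i=1}^{k_0}u(T^i(x))\cdot f(T^{k_0}(x))=g(x),
\end{align*}
using that $u\circ T^{k_0+1}$ never vanishes. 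This gives $g\in\mathcal{R}(W_{u,T}^{k_0+1})$.

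It remains to check $\tilde f\in L(p,q)$, which I would do exactly as in Theorem \ref{45}. Since $u$ is bounded away from zero, say $|u|>\epsilon$ a.e., we have $|\tilde f(w)|\leq |f(\rho(w))|/\epsilon$ on $S'$, so $\{w\in S':|\tilde f(w)|>s\}\subseteq T_{k_0}(\{y\in S:|f(y)|>s\epsilon\})$. The hypothesis $\mu(T(A))\leq\mu(A)$ then yields $\mu_{\tilde f}(s)\leq\mu_f(s\epsilon)$ for all $s\geq0$, hence $\tilde f^{*}(t)\leq \tfrac1\epsilon f^{*}(t)$ and $\tilde f^{**}(t)\leq\tfrac1\epsilon f^{**}(t)$, giving $\|\tilde f\|_{pq}\leq\tfrac1\epsilon\|f\|_{pq}<\infty$.

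The step I expect to be the main obstacle is the construction and, above all, the measurability of the inverse $\rho$ and of $\tilde f$. This is where the standing assumption that $T$ carries measurable sets to measurable sets (implicit in the hypothesis $\mu(T(A))\leq\mu(A)$) together with completeness of $(X,\mathcal{A},\mu)$ is essential: for a Borel set $E$ one rewrites $\{w\in S':f(\rho(w))\in E\}=T_{k_0}(\{y\in S:f(y)\in E\})$ modulo a null set, so that $f\circ\rho$ is measurable. Handling these exceptional null sets carefully --- both in defining $\rho$ unambiguously on $S'$ and in transferring the a.e.\ identity $\rho\circ T_{k_0}=\mathrm{id}$ back to $X$ via the nonsingular $T^{k_0}$ --- is the delicate part of the argument.
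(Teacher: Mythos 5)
Your proposal is correct and follows essentially the same route as the paper's own proof: a contrapositive argument in which one inverts $T_{k_0}$ on its image off a null set (your $\rho(w)$ is exactly the paper's ``unique $x_y$ with $T_k(x_y)=y$''), defines the candidate preimage as $f(\rho(w))/u(w)$ on $T^{k_0+1}(X)$ and $0$ elsewhere, bounds its distribution function by $\mu_f(s\epsilon)$ using $|u|>\epsilon$ a.e.\ together with $\mu(T(A))\leq\mu(A)$, and verifies $W_{u,T}^{k_0+1}\tilde f=W_{u,T}^{k_0}f$ a.e.\ by discarding the null sets coming from $\{|u|\leq\epsilon\}$ and the non-injectivity set. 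The null-set and measurability bookkeeping you flag as delicate is precisely what the paper handles (somewhat tersely) via its sets $\mathcal{R}_1$, $\mathcal{R}_2$ and $\mathcal{F}_k$.
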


\begin{proof}
	The proof is done by using contrapositive statement, that is, if there exists an integer $k\geq 0$ such that $T_k= T|_{T^k(X)}: T^k(X)\rightarrow T^{k+1}(X)$ is injective a.e. on $T^k(X)$ then the descent of $W_{u,T}$ is finite and less than equal to $k$. Let $k\in\mathbb{N}\cup\{0\}$ be a fixed number for which the map $T_k$ is injective a.e. on $T^k(X)$. So, by definition, the measure of $\mathcal{F}_k$ is zero where 
	$$\mathcal{F}_k:=\{x\in T^k(X)~|~\exists~ y\in T^k(X), y\neq x \text{ and } T_k(x)=T_k(y)\}.$$
	Further, by definition of bounded away from zero, there is $\epsilon>0$ satisfying $|u(x)|>\epsilon$ a.e. on $X$. Let us take 
	$$\mathcal{R}_1:=\{x\in X~|~|u(x)|>\epsilon\} \text{~~and~~}\mathcal{R}_2:=\{x\in X~|~|u(x)|\leq \epsilon\}.$$
	Obviously, $\mu(\mathcal{R}_2)=0$, and we also know that  $\mathcal{R}(W_{u,T}^{k+1})\subseteq \mathcal{R}(W_{u,T}^k)$. To demonstarte the reverse containment, let $h\in \mathcal{R}(W_{u,T}^k)$. Then $h= W_{u,T}^k f$, $f\in L(p,q)$. Define $g:X\mapsto \mathbb{C}$ as   
	$$g(y)=\left\{\begin{array}{cl}
		\dfrac{f(x_y)}{u(y)} &\mbox{ if there is a unique } x_y\in T^{-1}(\mathcal{R}_1)\cap \bigg(T^k(X)\setminus \mathcal{F}_k\bigg): T_k(x_y)=y\\\\
		0 & \mbox{ otherwise }
	\end{array}\right..$$
	Clearly, $g$ is well defined and belongs to $L(p,q)$. Indeed, for $s\geq 0$, the distribution function of $g$ is given by 
	\begin{align*}
		\mu_g(s)&=\mu\bigg(\{y\in X~|~|g(y)|>s\}\bigg)\\
		&=\mu\bigg(\{y\in T^{k+1}(X)~|~\text{ we get } x_y\in T^{-1}(\mathcal{R}_1)\cap \bigg(T^k(X)\setminus \mathcal{F}_k\bigg) : T_k(x_y)=y \text{ and } \left|\frac{f(x_y)}{u(y)}\right|>s\}\bigg)\\
		&\leq \mu\bigg(\{T(x_y)\in T^{k+1}(X)\cap \mathcal{R}_1 ~\big|~ |f(x_y)|>s\epsilon\}\bigg)\\
		&\leq \mu\bigg(\{T(x_y)\in X~|~|f(x_y)|>s\epsilon\}\bigg)
		\leq \mu\bigg(\{x_y\in X~|~|f(x_y)|>s\epsilon\}\bigg)=\mu_f(s\epsilon).
	\end{align*}	
	Thus, $g^*(t)\leq \frac{1}{\epsilon}~ f^*(t)$ and $g^{**}(t)\leq \frac{1}{\epsilon}~ f^{**}(t)$, for each $t>0$. As a consequence, $\|g\|_{pq}\leq \frac{1}{\epsilon} \|f\|_{pq}$, $1<p\leq \infty$, $1\leq q\leq \infty$. Finally, the result follows as $f\in L(p,q)$. The target is to show that $g$ is the pre-image of $h$ under the operator $W_{u,T}^{k+1}$. For this purpose, we split the set $X$ as follows 
	\begin{align*}
		X&=\bigg( T^{-(k+1)}(\mathcal{R}_1)\cap T^{-k}(T^k(X)\setminus \mathcal{F}_k)\bigg)\cup \bigg(T^{-(k+1)}(\mathcal{R}_1)\cap T^{-k}(T^k(X)\setminus \mathcal{F}_k)\bigg)^c\\
		&=\bigg( T^{-(k+1)}(\mathcal{R}_1)\cap T^{-k}(T^k(X)\setminus \mathcal{F}_k)\bigg)\cup \bigg(T^{-(k+1)}(\mathcal{R}_2)\cup T^{-k}\bigg((X\setminus T^k(X)) \cup F_k\bigg)\bigg)
	\end{align*}
	Now, for every $x\in \bigg( T^{-(k+1)}(\mathcal{R}_1)\cap T^{-k}(T^k(X)\setminus \mathcal{F}_k)\bigg)$, we observe that 
	\begin{align*}
		\left(W_{u,T}^{k+1}g\right)(x) &= \prod\limits_{i=1}^{k+1} u\circ T^i(x)\cdot g\circ T^{k+1}(x)\\
		&= \prod\limits_{i=1}^{k} u\circ T^i(x)\cdot  u\circ T_k(T^{k}(x))\cdot  g\circ T_k(T^{k}(x))\\
		&= \prod\limits_{i=1}^{k} u\circ T^i(x)\cdot f\circ T^{k}(x)\\
		&= \left(W_{u,T}^k f\right)(x).
	\end{align*}
	As a consequence, 
	$$\{x\in X~|~\left(W_{u,T}^{k+1}g\right)(x)\neq \left(W_{u,T}^{k}f\right)(x)\}\subseteq \bigg(T^{-(k+1)}(\mathcal{R}_2)\cup T^{-k}\bigg((X\setminus T^k(X)) \cup F_k\bigg)\bigg),$$
	and as the superset has measure zero, $W_{u,T}^{k+1}g = W_{u,T}^{k}f = h$ a.e. on $X$. This implies that  $h$ is the member of $\mathcal{R}(W_{u,T}^{k+1})$. Thus, $\mathcal{R}(W_{u,T}^k)=\mathcal{R}(W_{u,T}^{k+1})$, that is, $\beta(W_{u,T})\leq k$. As stated, by contrapositive statement, we get that if the descent of $W_{u,T}$ is not finite, then there does not exist any non-negative integer $k$ such that $T_k = T|_{T^k(X)} : T^k(X)\rightarrow T^{k+1}(X)$ is injective almost everywhere on $T^k(X)$. Hence the result.
\end{proof}

	In the above theorem the condition $u$ is bounded away from zero cannot be relaxed which we justify with the help of following example. 
	
	\begin{Exa}
		Consider the Lebesgue measure space $(\mathbb{R},\mathcal{A},\mu)$. It is clearly $\sigma-$finite and complete. Define $T:\mathbb{R}\mapsto \mathbb{R}$ and $u:\mathbb{R}\mapsto \mathbb{C}$ as $T(x)=x-1$ and 
		\begin{align*}
			u(x) = \left\{\begin{array}{ll}
				0 & \mbox{ if } x\in [0,1]\\
				1 & \mbox{ otherwise }
			\end{array}\right.
		\end{align*} 
	respectively. With this pair of $(u,T)$, $W_{u,T}\in\mathfrak{B}(L(p,q))$, $1<p\leq \infty$, $1\leq q\leq \infty$ and $\mu\circ T(E) = \mu(E)$ for all measurable sets $E$. Moreover, the mappings $T_k : T^k(X)\rightarrow T^{k+1}(X)$ is injective almost everywhere on $T^k(X)$ for each $k\geq 0$. Finally, we show that $\chi_{[k+1,k+2]}\in\mathcal{R}(W_{u,T}^{k})\setminus \mathcal{R}(W_{u,T}^{k+1})$. The function $\chi_{[1,2]}$ is the pre-image of the function $\chi_{[k+1,k+2]}$ under the mapping $W_{u,T}^{k}$. Further, if we consider $h\in L(p,q)$ such that $\chi_{[k+1,k+2]}(x) = W_{u,T}^{k+1}h(x)$, then this leads to a contradiction as $u(x-(k+1))=0$. 
	\end{Exa}

The statement of the Theorem \ref{42} can be restated as follows:
	
	\begin{Thm}\label{417} 
		Suppose that $(X,\mathcal{A},\mu)$ is a $\sigma-$finite complete measure space, $T:X\rightarrow X$ satisfying $\mu(T(A))\leq \mu(A)$ for all $A\in\mathcal{A}$ and $u$ is bounded away from zero. If $k\in\mathbb{N}\cup\{0\}$ is the smallest number such that $T_k = T|_{T^k(X)} : T^k(X)\rightarrow T^{k+1}(X)$ is injective almost everywhere on $T^k(X)$ then the descent of $W_{u,T}$ is less than and equal to $k$.
	\end{Thm}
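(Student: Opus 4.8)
The plan is to invoke the construction already carried out in the proof of Theorem \ref{42}, observing that its contrapositive form actually proved the quantitative statement we now want. Indeed, in Theorem \ref{42} we argued by contrapositive: we fixed a single non-negative integer $k$ for which $T_k = T|_{T^k(X)} : T^k(X)\rightarrow T^{k+1}(X)$ is injective almost everywhere on $T^k(X)$, and from this we deduced $\mathcal{R}(W_{u,T}^k)=\mathcal{R}(W_{u,T}^{k+1})$, i.e. $\beta(W_{u,T})\leq k$. So the entire content of Theorem \ref{417} is the remark that, if $k$ is taken to be the \emph{smallest} such integer, then in particular $T_k$ is injective almost everywhere on $T^k(X)$, and the same construction applies verbatim.

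Concretely, I would first record that since $T_k$ is injective a.e., the set $\mathcal{F}_k:=\{x\in T^k(X)~|~\exists~ y\in T^k(X), y\neq x \text{ and } T_k(x)=T_k(y)\}$ is null. Given any $h\in\mathcal{R}(W_{u,T}^k)$, write $h=W_{u,T}^k f$ with $f\in L(p,q)$, and define $g$ exactly as in Theorem \ref{42}: using $\epsilon>0$ with $|u|>\epsilon$ a.e. and $\mathcal{R}_1=\{x\in X~|~|u(x)|>\epsilon\}$, set $g(y)=f(x_y)/u(y)$ whenever there is a unique $x_y\in T^{-1}(\mathcal{R}_1)\cap(T^k(X)\setminus\mathcal{F}_k)$ with $T_k(x_y)=y$, and $g(y)=0$ otherwise. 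The well-definedness of $g$ is precisely where the a.e. injectivity of $T_k$ is used; this is the only place the smallness of $k$ matters, since any $k$ with $T_k$ injective a.e. suffices.

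Next I would verify $g\in L(p,q)$ through the distribution-function estimate $\mu_g(s)\leq\mu_f(s\epsilon)$, which rests on $u$ being bounded away from zero together with $\mu(T(A))\leq\mu(A)$; this yields $g^*(t)\leq\frac{1}{\epsilon}f^*(t)$ and $g^{**}(t)\leq\frac{1}{\epsilon}f^{**}(t)$, hence $\|g\|_{pq}\leq\frac{1}{\epsilon}\|f\|_{pq}<\infty$. Finally I would confirm $W_{u,T}^{k+1}g=h$ a.e. by checking that the disagreement set is contained in the null set $T^{-(k+1)}(\mathcal{R}_2)\cup T^{-k}\big((X\setminus T^k(X))\cup\mathcal{F}_k\big)$, so that $h\in\mathcal{R}(W_{u,T}^{k+1})$. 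This gives $\mathcal{R}(W_{u,T}^k)\subseteq\mathcal{R}(W_{u,T}^{k+1})$, and combined with the trivial reverse inclusion, $\mathcal{R}(W_{u,T}^k)=\mathcal{R}(W_{u,T}^{k+1})$, whence $\beta(W_{u,T})\leq k$.

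Since this theorem is a restatement, there is no genuinely new obstacle; the only point requiring care is making explicit that the argument of Theorem \ref{42} produces the \emph{bound} $\beta(W_{u,T})\leq k$ for the particular $k$ at hand, rather than merely the contrapositive implication. In practice I would simply write that the conclusion is immediate from the proof of Theorem \ref{42} applied to the smallest admissible $k$, without reproducing the construction of $g$ in full detail.
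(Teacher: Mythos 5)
Your proposal is correct and matches the paper exactly: the paper offers no separate proof of Theorem \ref{417}, presenting it explicitly as a restatement of Theorem \ref{42}, whose contrapositive argument (the construction of $g$ via a.e.\ injectivity of $T_k$, the estimate $\mu_g(s)\leq\mu_f(s\epsilon)$, and the verification $W_{u,T}^{k+1}g=h$ a.e.) already yields $\mathcal{R}(W_{u,T}^k)=\mathcal{R}(W_{u,T}^{k+1})$ and hence $\beta(W_{u,T})\leq k$. Your observation that the minimality of $k$ plays no role beyond the statement's phrasing is also accurate.
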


 	The reader can verify that the condition $u\neq 0$ a.e. on $X$ will give $h_T=0$ a.e. on $(Supp(u))^c$. In fact, 
\begin{align*}
	\mu(\{x\in (Supp(u))^c~|~h_T(x)\neq 0\})&=\mu(\{x\in X~|~h_T(x)\neq 0\}\cap \{x\in X~|~u(x)=0\})\\
	&\leq \mu(\{x\in X~|~u(x)=0\})=0.
\end{align*} 
We draw out here a conclusion that all the results that have been obtained with the assumption $h_T=0$ a.e. on $(Supp(u))^c$ do hold on replacing this condition by the condition $u\neq 0$ a.e. on $X$. 
Let us take another operator defined as ${\widehat{W}}_{u,T}(h)=u\cdot h\circ T$ on $L(p,q)$, $1<p\leq \infty$, $1\leq q\leq \infty$. If $u\in L_\infty(\mu)$ and $u\neq 0$ a.e. on $X$ then we can also deduce the following conclusions about $\widehat{W}_{u,T}$ without any extra efforts.
\begin{enumerate}[(a)]
	\item For every $k\in\mathbb{N}\cup\{0\}$, $\mathcal{N}\big(\widehat{W}_{u,T}^k\big)=L(p,q)(X_k)$.	
	\item A necessary and sufficient condition for $\alpha(\widehat{W}_{u,T})=k$ is that $k$ is the least non-negative integer such that $\mu_k$ and $\mu_{k+1}$ are equivalent measures.
	\item Suppose that $C_T$, $W_{u,T}$ and $\widehat{W}_{u,T}\in \mathfrak{B}(L(p,q))$, $1<p\leq \infty$, $1\leq q\leq \infty$. Then $\mathcal{N}(C_T^k)=\mathcal{N}(W_{u,T}^k)=\mathcal{N}\bigg(\widehat{W}_{u,T}^k\bigg)=L(p,q)(X_k)$ for every non-negative integer $k$. 
	\item If $C_T$, $W_{u,T}$ and $\widehat{W}_{u,T}$ are all belong to $\mathfrak{B}(L(p,q))$, $1<p\leq \infty$, $1\leq q\leq \infty$ then $\mu_k\ll\mu_{k+1}\ll\mu_k \Longleftrightarrow \mathcal{N}(C_T^k)=\mathcal{N}(C_T^{k+1})\Longleftrightarrow \mathcal{N}(W_{u,T}^k)=\mathcal{N}(W_{u,T}^{k+1})\Longleftrightarrow \mathcal{N}\bigg(\widehat{W}_{u,T}^{k}\bigg)=\mathcal{N}\bigg(\widehat{W}_{u,T}^{k+1}\bigg)$ for every $k\in\mathbb{N}\cup\{0\}$.
	\item If $C_T$, $W_{u,T}$, $\widehat{W}_{u,T}\in\mathfrak{B}(L(p,q))$, $1<p\leq \infty$, $1\leq q\leq \infty$ then $\alpha(C_T)=k$, $\alpha(W_{u,T})=k$, $\alpha(\widehat{W}_{u,T})=k$ and $k$ is the least non-negative integer satisfying $\mu_k\ll\mu_{k+1}\ll\mu_{k}$ are all equivalent statements.
	\item Let $\mu\circ T(A)\leq \mu(A)$ for all $A\in\mathcal{A}$. Then a sufficient condition for $\widehat{W}_{u,T}$ to have ascent $k$ is that $k$ is the smallest integer from the set $\mathbb{N}\cup\{0\}$ in such a way that $X_k^c\subseteq T^{k+1}(X_k^c)$, where $X_k^c= X\setminus X_k$.	
\end{enumerate} 	
Now, we justify that the condition $u\neq 0$ a.e. on X, that is, $\mu(\{x\in X~:~u(x)=0\})=0$ is essential for the results obtained above with the help of an example. 
\begin{Exa}
	On the Lebesgue measure space $([0,1],\mathcal{A},\mu)$, define $T$ and $u$ on $[0,1]$ as $T(x)=x$ and
	\begin{align*}
		u(x) = \left\{ \begin{array}{cl}
			0 & \mbox{ if } x\in [0,1/2)\\
			x & \mbox{ if }  x\in [1/2,1]
		\end{array}\right.
	\end{align*}  
respectively. Obviously, $\widehat{W}_{u,T}$ is a well defined weighted composition operator and $\chi_{[0,1/4]}\in\mathcal{N}(\widehat{W}_{u,T})\setminus \{0\}$. Also, $\mu_k=\mu$ and $X_k=\emptyset$ for every $k\in\mathbb{N}\cup\{0\}$. As a consequence, we get $\mu_0\ll\mu_1\ll\mu_0$, $X_0^c\subseteq T(X_0^c)$ and $\alpha(\widehat{W}_{u,T})\geq 1$. Hence, the statements of part (b) and (f) above are not true in this case.   
\end{Exa}
At the end, if we consider $N_0=X$ and $N_k=\{x\in X~|~u(T^{i}(x))\neq 0 \mbox{ for all } 0\leq i\leq k-1\}$ then Theorems \ref{38} and \ref{414} hold for $\widehat{W}_{u,T}$. Also, Theorems \ref{42} and \ref{417}, under the same conditions, are true for $\widehat{W}_{u,T}$.

	\section{Examples}
	
	The goal of this section is to give some applications which support the results achieved in the paper.

		\begin{Exa}
		On the measure space $(\mathbb{N},2^{\mathbb{N}},\mu)$, define $T:\mathbb{N}\rightarrow \mathbb{N}$ as $T(m)=2n$ if $m\in\{2n-1,2n\}_{n\in \mathbb{N}}$ and $u:\mathbb{N}\rightarrow \mathbb{C}$ as $u(n)=\left\{\begin{array}{cl}
			0 & \mbox{ if $n$ is odd}\\
			1 & \mbox{ if $n$ is even} 
		\end{array}\right.$. From the definition of $u$ and $T$, we can conclude that $(Supp(u))^c=\{2n-1:n\in\mathbb{N}\}$ and $h_T$ is given by
$$h_T(n)=\left\{\begin{array}{cl}
			0 & \mbox{if $n$ is odd}\\
			2 & \mbox{if $n$ is even}
		\end{array}.\right.$$ 
Clearly,  $h_T=0$ on $(Supp(u))^c$. Further, for every $k\in\mathbb{N}$, $T^k=T$, $h_{T^k}=h_T$ and $X_k^c\subseteq T^{k+1}(X_k^c)$. By Theorem \ref{411}, the ascent of $W_{u,T}$ is equal to $1$. 
	\end{Exa}

\begin{Exa}
	Consider the $\sigma-$finite complete measure space $\left([0,1], \mathcal{A}, \mu\right)$ with $\mu$ the Lebesgue measure. Define $T:[0,1]\mapsto [0,1]$ and $u:[0,1]\mapsto \mathbb{C}$, respectively, as $T(x)=\frac{x}{2}$ and 
	\begin{align*}
			u(x) = \left\{\begin{array}{ll}
			x & x \mbox{ is irrational in } [0,1]\\
			0   & x \mbox{ is rational in } [0,1]
		\end{array}.\right.
	\end{align*} 
With this pair of mappings, we have $W_{u,T}\in \mathfrak{B}(L(p,q))$, $1<p\leq \infty$, $1\leq q\leq \infty$. Take $A_k=\left(\left(\frac{1}{2}\right)^{k+1}, \left(\frac{1}{2}\right)^k\right)$. These choices give $N_k=\{x\in [0,1]:x \mbox{ is irrational}\}$ and $T^{-k}(A_k)= \left(\frac{1}{2},1\right)$. As a consequence, we have $\mu(T^{-k}(A_k)\cap N_k)=\frac{1}{2}\neq 0$ and $\mu(T^{-(k+1)}(A_k)\cap N_{k+1})=\mu(\emptyset)=0$. By Theorem \ref{38}, $\alpha\left(W_{u,T}\right) = \infty$.
\end{Exa}

	\begin{Exa}
	Consider the $\sigma-$finite and complete measure space $([0,\infty),\mathcal{A},\mu)$, where $\mu$ is the Lebesgue measure.  Define $T:[0,\infty)\mapsto [0,\infty)$ and $u:[0,\infty)\mapsto \mathbb{C}$ by setting 
	\begin{align*}
		T(x) = \left\{\begin{array}{ll}
			x & \mbox{ if } x\in [0,1]\\
			x-1 & \mbox{ elsewhere }
		\end{array}\right.
	\end{align*} 
	 and 
	$u(x)=1$ respectively. With this pair of $(u,T)$, $W_{u,T}\in \mathfrak{B}(L(p,q))$, $1<p\leq \infty$, $1\leq q\leq \infty$, and $\mu\circ T(A)\leq \mu(A)$ for all $A\in\mathcal{A}$. Also,  $N_{k+1}=[0,\infty)$, $A_1^k=(0,1]$, $A_2^k=(1,2]$ and $T_k(A_1^k)=T_k(A_2^k)$ where $T_k = T|_{T^k(N_{k+1})}: T^k(N_{k+1})\rightarrow T^{k+1}(N_{k+1})$ for every $k\in\mathbb{N}\cup\{0\}$. Thus, using Theorem \ref{new419}, we get $\beta(W_{u,T})=\infty$. 
\end{Exa}

\begin{Exa}
		Let $([-1,1],\mathcal{A},\mu)$ be a $\sigma-$finite complete Lebesgue measure space. Define $u:[-1,1]\rightarrow \mathbb{C}$ as $u\equiv 1$ and $T:[-1,1]\rightarrow [-1,1]$ as 
		$$T(x)=\left\{\begin{array}{cl}
			\frac{x}{2} & \mbox{ if } 0\leq x\leq 1\\\\
			\frac{-x}{2} & \mbox{ if } -1\leq x\leq 0
		\end{array}\right..$$
		Now, for any measurable set $A\in\mathcal{A}$, we have $T^{-1}(A)=2\left(A\cap [0,\frac{1}{2}]\right)\cup (-2)\left(A\cap [0,\frac{1}{2}]\right)$ and $T(A)=\frac{A_1}{2}\cup \frac{-A_2}{2}$ where $A_1=A\cap [0,1]$ and $A_2=A\cap [-1,0]$. This implies that $\mu(T^{-1}(A))\leq 2 \mu\left(A\cap [0,\frac{1}{2}]\right)+2
		\mu\left(A\cap [0,\frac{1}{2}]\right)\leq 4 \mu(A)$ and $\mu(T(A))\leq \mu(A)$. As a conclusion $h_T\in L_{\infty}(\mu)$, and hence $W_{u,T}\in \mathfrak{B}(L(p,q))$. 
		Finally, note that, for every natural number $k$, $T_k=T|_{T^k([-1,1])}:T^k([-1,1])\rightarrow T^{k+1}([-1,1])$ is injective almost everywhere on $T^k([-1,1])$. By Theorem \ref{417}, we have descent of $W_{u,T}$ is less than and equal to $1$. Moreover, we can see that $\beta(W_{u,T})=1$.	For this purpose one can verify that the non-zero characteristic function $\chi_{[-1,0]}$ from $L(p,q)$ is not the member of $\mathcal{R}(W_{u,T})$, i.e., it has no preimage under $W_{u,T}$. In fact, suppose that $W_{u,T}h=\chi_{[-1,0]}$ for some $h\in L(p,q)$. Then $\mu(E_1^c)=\mu(E_2^c)=0$ where $E_1:=\{x\in [-1,0)~|~h\circ T(x)=1\}$, $E_2:=\{x\in (0,1]~|~h\circ T(x)=0\}$, $E_1^c:=[-1,0)\setminus E_1$ and $E_2^c:=(0,1]\setminus E_2$. Finally, the result follows using the fact that $T$ is non-singular, $E_1\subseteq T^{-1}(T(E_1))$ and $\mu(T(E_1))\leq \mu(T(E_2^c))\leq \mu(E_2^c)=0$. 
\end{Exa}

\section*{Statements and Declarations}
 No funding was received for conducting this study. Authors have no competing interest to declare that are relevant to the content of this article.

\end{document}